\renewcommand{\leq}{\leqslant}
\renewcommand{\geq}{\geqslant}
\renewcommand{\le}{\leqslant}
\renewcommand{\ge}{\geqslant}
\def\eps{\varepsilon}
\definecolor{mno}{rgb}{0.5,0.1,0.5}
\newcommand{\R}{\mathds R}
\newcommand{\bS}{\mathds S}
\newcommand{\e}{\varepsilon}
\newcommand{\Pp}{\mathds P}
\newcommand{\Ee}{\mathds E}
\newcommand{\I}{\mathds 1}
\newcommand{\w}{\omega}
\newcommand{\Z}{\mathds Z}
\newcommand{\D}{\mathscr{D}}
\newcommand{\F}{\mathscr{F}}
\newcommand{\LL}{\mathcal{L}}
\newtheorem{theorem}{Theorem}[section]
\newtheorem{lemma}[theorem]{Lemma}
\newtheorem{proposition}[theorem]{Proposition}
\theoremstyle{definition}
\newtheorem{remark}[theorem]{Remark}
\numberwithin{equation}{section}
\begin{document}
\allowdisplaybreaks

\title[long range random
walks in  balanced random environments] {\bfseries Quenched
invariance principle for
long range random walks in  balanced
random environments}
\author{Xin Chen\qquad Zhen-Qing Chen\qquad Takashi Kumagai\qquad Jian Wang}

\maketitle

\begin{abstract}
We establish via a probabilistic approach the
quenched invariance principle for a class of long range
random walks in independent (but not necessarily identically distributed) balanced random environments,
with the transition probability from $x$ to $y$  on average being comparable to
$|x-y|^{-(d+\alpha)}$ with $\alpha\in (0,2]$.
We use the martingale property to estimate exit time from balls
and establish
tightness of the scaled processes, and apply the uniqueness of the martingale problem
to identify the limiting process.
When $\alpha\in (0,1)$, our approach works even for non-balanced cases.
When $\alpha=2$, under a diffusive with the logarithmic perturbation scaling,
we show that the limit of scaled processes is a
Brownian motion.

\bigskip

\noindent\textbf{Keywords:}
long range random walk; balanced random environment; martingale problem

\medskip

\noindent \textbf{MSC 2010:} 60G51; 60G52; 60J25; 60J75.
\end{abstract}
\allowdisplaybreaks

\section{Introduction and main result}
Let $(\Omega, \F, \Pp)$ be a probability space. We will use $\Ee$ to denote the mathematical expectation
taken under $\Pp$.
Let  $\Z$ and $\Z_+$ denote the set of integers and non-negative integers, respectively. For $d\ge 1$,
let $\Z^d$ be the $d$-dimensional integer lattice,
and $\Z_{\bf 0}^d:=\Z^d\setminus\{ {\bf0}\}$, where
${\bf 0}:=(0, \cdots, 0)$ is the zero element in  $\Z^d$ (or $\R^d)$. Here and in what follows, we use $:=$ as way of definition.
For each fixed $\w\in \Omega$, we consider a continuous time random walk  $X^\w:=(X_t^\w)_{t\ge 0}$
on $\Z^d$ with the  infinitesimal
generator
$$
\LL^{\w}f(x):=\sum_{z\in
\Z_{\bf 0}^d}\big(f(x+z)-f(x)\big)\frac{\kappa(x,z)(\w)}{|z|^{d+\alpha}},\quad
f\in B_b(\Z^d),
$$
where $0<\alpha\le 2$ and $\kappa(x,z)(\w)$ satisfies the following balanced condition
\begin{equation}\label{balacd}
\kappa(x,z)(\w)=\kappa(x,-z)(\w)\ge 0
\quad \hbox{ for all }
x\in \Z^d, z\in \Z_{\bf 0}^d \hbox{ and } \w \in \Omega.
\end{equation}
In other words, $X^\w$ is a continuous time Markov process on $\Z^d$
such that conditioned on $X^\w_t=x$, $X^\w$ waits an exponentially distributed random
amount of time with parameter
$$
\lambda^\w(x):=\sum_{z\in \Z_{\bf 0}^d}\kappa(x, z)(\w)|z|^{-(d+\alpha)}
$$
before it jumps to $x+z$ with probability $\kappa(x, z)(\w)|z|^{-(d+\alpha)}/\lambda^\w (x)$.
When
$\alpha\in (0,2)$ and $\kappa(x,z)(\w)$ is uniformly elliptic, i.e., there exist
 (non-random)
 constants $0<c_1\le c_2<\infty$ such that $c_1\le \kappa(x,z)(\w)\le c_2$ for all
 $x\in \Z^d,$ $z\in \Z_{\bf 0}^d$, and $\w\in \Omega$, the process $X^\w$ is called an
$\alpha$-stable-like (balanced but not
necessarily symmetric)
random walk in the literature.

If $\kappa(x,z)(\w)=0$ for all $\w\in \Omega$, $x\in \Z^d$ and $z\in \Z^d$ with $|z|>1$, then the process
$X^\w$ is reduced into a nearest neighbor random walk
in balanced  random environments
(NNBRW). In particular, a NNBRW can
be viewed as the discrete counterpart corresponding to an $\R^d$-valued diffusion process in
balanced random
environments, which was
initially considered by Papanicolaou and Varadhan \cite{PV}. The NNBRW was first introduced by Lawler \cite{Law}, where the quenched invariance principle
was established under some uniformly elliptic condition on the conductance in ergodic environments. Thirty years later, Guo and Zeitouni \cite{GZ} proved the
quenched invariance principle for NNBRWs
under some inverse moment condition on the conductance in balanced random environments.
When the environment is i.i.d.,
(that is, $\{\kappa(x,\cdot)\}_{x\in \Z^d}$ are i.i.d. across the sites $x\in \Z^d$ under $\Pp$,)
the quenched invariance principle for NNBRWs was established by Berger and Deuschel \cite{BD} under
the so-called \lq\lq genuinely $d$-dimensional\rq\rq\, condition, where the
 environment is allowed to be degenerate. More recently, under such \lq\lq genuinely $d$-dimensional\rq\rq\, condition, Berger, Cohen, Deuschel and Guo \cite{BCDG} obtained
some quantitative estimates for the solution to elliptic equations, as well as the elliptic
Harnack inequality. For the
NNBRW in time-dependent
balanced environments, the quenched invariance principle has been proven by Deuschel, Guo and
Ramirez in \cite{DGR}, while the quenched local central limit theorem was established by
Deuschel and Guo in \cite{DG}.

\medskip

The goal of this paper is to establish
the quenched
invariance principle for long range random walks in balanced  random environments, i.e.,
$\kappa(x,z)(\w)$ does not vanish when $|z|>1$.
The following assumption will be in force in the paper.

\medskip

\noindent {\bf Assumption (A0)}{ \it
$\{\kappa(x,z): x\in \Z^d, z\in \Z_{+,*}^d
\}$
is a sequence of independent non-negative
random variables, where $$\Z_{+,*}^d:=\big\{x=(x_1,\cdots, x_d)\in \Z^d: x_{i_0}>0\ {\rm for}\
i_0:=\min\{1\le i\le d: x_i\neq 0\}\big\}.$$}

Note that $\Z_{\bf 0}^d=  \Z_{+,*}^d \cup (- \Z_{+,*}^d )
$, and $\kappa (x, z)$
satisfies the balanced condition \eqref{balacd} (i.e. $\kappa(x,z)$ is symmetric in $z\in\Z_{\bf 0}^d$ for each fixed $x\in \Z^d$).
So $\kappa (x, z)$ on $\Z^d\times \Z_{\bf 0}^d$ is determined by its values on $\Z^d \times \Z_{+,*}^d $.

\medskip

We will consider the following assumption when $\alpha\in (0,2)$.

\medskip

\noindent {\bf Assumption (A1)} {\it
Assume $\alpha\in (0,2)$
and $d>4-2\alpha$.

\begin{itemize}
\item [(i)]
There is some constant
$p>\max \left\{\frac{2(d+1)}{d},
\frac{d+1}{2-\alpha} \right\}$ so that
\begin{equation}\label{e:1.2}
\sup_{x\in \Z^d, z\in\Z^d_{\bf 0} }\Ee \left[
\kappa(x,z)^{p} \right]<\infty.
\end{equation}

\item [(ii)] There exists a bounded
continuous
function $K:\R^d\times \R_{\bf 0}^d\rightarrow \R_+:=[0,\infty)$ such that
for  every  large
integer $R\geq 1$
and small $\varepsilon\in (0,1)$,
\begin{equation}\label{e:1.3}
\lim_{n \rightarrow \infty}\sup_{x\in \Z^d,z\in \Z_{\bf0}^d:  \atop{|x|\le nR, \varepsilon n<|z|<n/\varepsilon}} \big|\Ee [\kappa(x, z)]-K(x/n,z/n)\big|=0,
\end{equation}
and that the solution to the martingale problem for
$(\bar \LL , C_c^2 (\R^d))$
is unique,
 where
  for $f\in C^\infty_c(\R^d)$
\begin{equation}\label{l5-3-1a}\begin{split}
\bar \LL f(x):&= {\rm p.v.} \int_{\R_{\bf 0}^d}
\big(f(x+z)-f(x)\big)\frac{K(x,z)}{|z|^{d+\alpha}}\,dz \\
&=  \int_{\R_{\bf 0}^d}
\big(f(x+z)-f(x)-\nabla f(x)\cdot z \I_{\{|z|\leq 1\}} \big)\frac{K(x,z)}{|z|^{d+\alpha}}\,dz,
 \end{split}\end{equation} $\R^d_{\bf 0}:=\R^d\setminus\{\bf0\}$,
and {\rm p.v.} stands for principal value.
\end{itemize}}

\medskip

\begin{remark} \label{R:1.1} \rm
\begin{enumerate}
\item [(i)] Condition \eqref{e:1.3} along with the continuity of $K(x,z)$ on $\R^d\times \R^d_{\bf0}$ implies that
\begin{equation}\label{e:1.5}
K(\lambda x, \lambda z) = K(x, z) \quad \hbox{for every } x, z \in \R^d \hbox{ and } \lambda >0,
\end{equation}
and if we write $z= r\theta \in \R^d_{\bf0}$  in spherical coordinates    with $r=|z|$ and $\theta = z/|z|\in \bS^{d-1}$,
then for every $x\in \R^d$,
$$
\lim_{r\to \infty} K(x,  r \theta ) = \lim_{r\to \infty} K(x/r,   \theta ) = K(0,  \theta ).
$$
In particular, \eqref{e:1.5} says that $K(x, z)$ is uniquely determined by its value on the unit sphere in $\R^{2d}$
and hence $K(x, z)$ is bounded on $\R^d\times \R^d$.

\item[(ii)]
  Observe that conditions \eqref{balacd} and \eqref{e:1.3}
along with the continuity of $K(x,z)$ on $\R^d\times \R^d_{\bf0}$
imply that $K(x, z)$ is balanced in the sense that
\begin{equation}\label{K-ban}
K(x, z)=K(x, -z) \quad \hbox{for all } x\in\R^d, z\in \R_{\bf 0}^d.
\end{equation}
Moreover,
 by \eqref{e:1.2}, \eqref{e:1.3} and the continuity of $K(x,z)$ on $\R^d\times \R^d_{\bf0}$,
$$\sup_{x\in \R^d, z\in \R^d_{\bf0}} K(x, z)\leq \sup_{x\in \Z^d, z\in \Z_{\bf0}^d} \Ee \kappa (x, z).$$
Note also that here we only assume the uniqueness of the martingale problem for $(\bar \LL, C_c^2(\R^d))$,
since the boundedness and the continuity of $K(x,z)$ on $\R^d\times \R^d_{\bf0}$ ensure that the existence of the martingale problem for $(\bar \LL, C_c^2(\R^d))$; see \cite[Theorem 4.1]{BT}.
Some sufficient conditions for the uniqueness of the solution to the martingale problem for $\bar \LL $ defined by \eqref{l5-3-1a} are known; for example, when
$K(\cdot,\cdot)$ satisfies
$0<K_1\le K(x,z)\le K_2<\infty$ for all $x\in \R^d$ and $z\in \R_{\bf0}^d$, and
$\int_0^1 r^{-1}\psi_K(r)\,dr<\infty$, where
$$\psi_K(r)=\sup_{z\in \R_{\bf0}^d, x,y\in \R^d \mathrm{\,\, with\,\, }|x-y|\le r}|K(x,z)-K(y,z)|,$$
see \cite[Theorem 1.2]{BT}, \cite[Theorem 4.6]{CZ}
and \cite[Theorem 1.3]{P}  for related work.
\end{enumerate}
\end{remark}

\medskip

We need the following assumption instead of Assumption {\bf(A1)} when $\alpha=2$.

\medskip

 \noindent {\bf Assumption (A2)} {\it
Assume $\alpha=2$
and $d\geq 1$.
\begin{itemize}
\item[(i)]
There are some constants $c_*>0$ and
$\eta \in (1, 2)$
such that
\begin{equation}\label{l5-3-1b}
\sup_{x\in \Z^d, z\in \Z^d_{\bf0}}\Ee
\left[ \exp\left(  c_* \kappa(x,z)^\eta\right)  \right]<\infty .
\end{equation}

\item[(ii)] There is a constant matrix $A:=(a_{ij})_{1\le i,j\le d}$ so that
for every $R>1$
and $1\leq i, j\leq d$,
\begin{equation}\label{matix}
\lim_{n \rightarrow \infty}\sup_{x\in n^{-1}\Z^d: |x|\le R}\bigg|\frac{1}{\log (1+n)} \sum_{z\in
\Z_{\bf 0}^d: |z|\le n} \frac{z_iz_j\Ee[\kappa(nx,z)]}
{|z|^{d+2}}-a_{ij}\bigg|=0.
\end{equation}
\end{itemize}}

\medskip
\ \

{\rm
For every $n\ge 1$ and $t>0$, let
$$X^{\w,n}_t:=\begin{cases}n^{-1}X^\w_{n^\alpha t},&\quad \alpha\in (0,2),\\
{n}^{-1}X^\w_{{n^2}t/{\log (n+1)} },
&\quad \alpha=2.\end{cases}$$
For any $T>0$, denote by $\Pp_x^{\w}$ (for simplicity we omit $T$ in the notation) the distribution of
$X^\w:=(X_t^\w)_{0\le t\le T}$ on the Skorohod space $\D([0,T];\R^d)$ with initial starting
point $x\in \Z^d$, and denote by
$\Pp_x^{\w,n}$ the distribution of $X^{\w,n}:=(X^{\w,n}_t)_{0\le
t\le T}$ on $\D ([0,T];\R^d)$ starting
at $x\in n^{-1}\Z^d$.}

\begin{theorem}\label{t5-2}
\begin{itemize}
\item[(i)]
Suppose that $\alpha\in (0,2)$, and that Assumptions {\bf (A0)} and {\bf (A1)} hold.  Then for every $T>0$
and a.s. $\w\in \Omega$, $\Pp^{\w,n}_{\bf0}$ converges weakly to $\bar
\Pp_{\bf0}$ on $\D([0,T];\R^d)$, where $\bar \Pp_{\bf0}$ is the
distribution of the unique solution to the martingale problem
for $(\bar {\LL}, C^2_c(\R^d))$ defined by \eqref{l5-3-1a}
starting at ${\bf0}$.

\item[(ii)]
Suppose that $\alpha=2$, and that Assumptions {\bf (A0)} and {\bf (A2)} hold.  Then for every $T>0$
and a.s. $\w\in \Omega$, $\Pp^{\w,n}_{\bf0}$ converges weakly to $\bar
\Pp_{\bf0}$ on $\D([0,T];\R^d)$, where $\bar \Pp_{\bf0}$ is the
distribution of
Brownian motion
starting at ${\bf0}$  with a deterministic
non-negative definite covariance matrix $A$.
\end{itemize}
\end{theorem}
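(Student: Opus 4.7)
My plan is the classical martingale problem approach, in two steps. First, I will prove that for $\Pp$-a.s. $\w$, the family $\{\Pp^{\w,n}_{\bf 0}\}_{n\ge 1}$ is tight on $\D([0,T];\R^d)$. Second, I will show that every subsequential weak limit solves the martingale problem for $(\bar\LL, C_c^2(\R^d))$ in part (i), respectively for the Brownian generator $\tfrac12\sum_{i,j}a_{ij}\partial_i\partial_j$ in part (ii). Uniqueness of the limit, and hence full weak convergence, then follows from the uniqueness hypothesis in Assumption \textbf{(A1)}(ii), respectively from the classical well-posedness of the Brownian martingale problem.

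For tightness, I will exploit the fact that the balanced condition \eqref{balacd} makes each coordinate of $X^\w$ a local martingale under $\Pp^\w_x$, with predictable quadratic variation controlled by sums involving $\kappa(x,z)|z|^{-(d+\alpha)}$. Combining the moment estimate \eqref{e:1.2} (respectively \eqref{l5-3-1b}) with the site-independence of $\{\kappa(x,z)\}$ from Assumption \textbf{(A0)}, a Borel--Cantelli argument on dyadic spatial and temporal scales will yield quenched lower bounds for the exit times of $X^{\w,n}$ from balls $B({\bf 0},R)$ and quenched moment bounds on $|X^{\w,n}_t-X^{\w,n}_s|$. Aldous' criterion then delivers tightness.

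To identify the limit, for each $f\in C_c^\infty(\R^d)$ I will use that
$$M^{f,n}_t := f(X^{\w,n}_t)-f(X^{\w,n}_0)-\int_0^t \LL^{\w,n} f(X^{\w,n}_s)\,ds$$
is a $\Pp^{\w,n}_{\bf 0}$-martingale, where $\LL^{\w,n}$ is the rescaled generator. Setting $u=z/n$ rewrites
$$\LL^{\w,n} f(x)=n^{-d}\sum_{u\in n^{-1}\Z_{\bf 0}^d}\big(f(x+u)-f(x)-\nabla f(x)\cdot u\,\I_{\{|u|\le 1\}}\big)\frac{\kappa(nx,nu)}{|u|^{d+\alpha}},$$
where the inclusion of the drift correction is free by the balanced condition. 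Taking expectation, this is a Riemann-type approximation of $\bar\LL f(x)$ by \eqref{e:1.3} together with the scaling identity \eqref{e:1.5}. A quenched law of large numbers, driven by the independence of $\{\kappa(x,z)\}$ and the $p$-th moment bound in \eqref{e:1.2}, will then give $\LL^{\w,n} f\to\bar\LL f$ uniformly on compacts for $\Pp$-a.s. $\w$; passing to the limit in the above martingale identity shows that every subsequential limit solves the martingale problem for $\bar\LL$.

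The main technical obstacle is exactly this quenched LLN for $\LL^{\w,n}f$. One has to simultaneously control the singular part ($|u|$ small), where the $C^2$-regularity of $f$ combined with the balanced condition yields an integrable majorant of order $|u|^{2-d-\alpha}$, and the tail part ($|u|$ large), where the moment bound together with the independence of $\{\kappa(x,z)\}$ must be used to concentrate the sum around its mean; the precise arithmetic conditions $p>2(d+1)/d$, $p>(d+1)/(2-\alpha)$ and $d>4-2\alpha$ are exactly what allows a Chebyshev plus dyadic covering estimate to beat the dimension and to upgrade pointwise convergence to uniform convergence in $x$ over balls. The case $\alpha=2$ in part (ii) is even more delicate, because $\sum_{|z|\le n}z_iz_j|z|^{-(d+2)}$ diverges logarithmically in $n$, which explains the $\log(n+1)$ normalisation; there the stronger exponential moment \eqref{l5-3-1b} is needed to concentrate the variance sum around $a_{ij}\log(n+1)$ uniformly in $x$, after which the martingale problem framework identifies the limit as Brownian motion with covariance matrix $A$.
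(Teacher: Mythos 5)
Your proposal follows essentially the same route as the paper: tightness via exit-time estimates combined with Aldous' criterion, identification of subsequential limits by passing to the limit in the martingale identity after establishing a quenched uniform law of large numbers for the rescaled generator (Borel--Cantelli, driven by the $p$-th moment or exponential moment bounds together with the site-independence of $\kappa$), and uniqueness of the limit via well-posedness of the martingale problem for $\bar\LL$. The paper organizes this in two layers---it first proves a fixed-environment invariance principle (Theorem~\ref{t5-1} for $\alpha\in(0,2)$, Theorem~\ref{t5-1-1-2} for $\alpha=2$) under deterministic Assumptions \textbf{(B1)/(B2)} (resp.\ \textbf{(C1)/(C2)}), and then uses Borel--Cantelli to verify those assumptions hold $\Pp$-a.s.---but the substance matches your plan, including the decomposition into a small-$|z|$ (second-order Taylor) part, a middle range to be concentrated, and a deterministic tail bound, and the recognition that the logarithmic normalization and the exponential moment \eqref{l5-3-1b} are what makes the $\alpha=2$ case close.

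One step to fix: your tightness argument appeals to the predictable quadratic variation of the coordinate processes. For $\alpha\le 2$ the jump kernel $\kappa(x,z)|z|^{-(d+\alpha)}$ is not square-summable in $z$, so although the balanced condition does make each coordinate of $X^\w$ a local martingale (in the sense that the drift sum vanishes), its predictable quadratic variation $\sum_z z_i^2\kappa(x,z)|z|^{-(d+\alpha)}$ is infinite; you cannot control the increments of $X^{\w,n}$ through its bracket, nor obtain finite second-moment bounds on $|X^{\w,n}_t-X^{\w,n}_s|$. The paper's Lemma~\ref{l5-1} avoids this entirely: it applies Dynkin's formula to a fixed bounded $C^2_b$ cutoff function $f_{x,r}$ and estimates $|\LL f_{x,r}|\le c\,r^{-\alpha}$ by using the balanced condition to insert the gradient correction on the inner scale $|z|\le r$ (yielding the summable weight $|z|^{2-d-\alpha}$) and the tail bound \eqref{a5-2-2} on the outer scale $|z|>r$. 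This produces the exit-time bound $\Pp_x(\tau_{B(x,r)}\le t)\le c\,t\,r^{-\alpha}$ without invoking second moments of the increments, and Aldous' criterion then needs only these exit-time bounds. If you replace the quadratic-variation step by this Dynkin-plus-cutoff argument, your proof goes through.
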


\medskip

We have the following sufficient condition for
\eqref{matix} in Assumption {\bf (A2)}(ii).

\begin{proposition}\label{t5-4}
Suppose  that $\alpha=2$, $d\ge 1$ and $\sup_{x\in \Z^d, z\in\Z^d_{\bf 0} }\Ee \left[
\kappa(x,z)\right]<\infty$.
If  there exists
a bounded and continuous
function $K:\R_{\bf 0}^d \rightarrow \R_+$ such that
 for every
 integer $R\geq 1$
 and  $\varepsilon\in (0,1)$,
\begin{equation}\label{t5-4-1}
\lim_{n\to \infty}\sup_{x\in n^{-1}\Z^d, z\in  n^{-1}\Z_{\bf0}^d
\atop{|x|\le nR,
n^\varepsilon\le |z|\le n}} \big|\Ee [\kappa(x, z)]-K(z/n)\big|=0,
\end{equation}
then
\eqref{matix} holds with
$$
a_{ij}=\int_{\mathbb{S}^{d-1}}K(\theta) \theta_i \theta_j \, d\theta,
$$
where $\mathbb
{S}^{d-1}:=\{z\in \R^d: |z|=1\}$ and $d\theta$ is the standard Lebesgue surface measure on $\mathbb{S}^{d-1}$.
\end{proposition}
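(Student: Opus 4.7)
The idea is to split the sum at $|z|=n^{\varepsilon}$, estimate the low-$|z|$ piece crudely, and on the high-$|z|$ piece apply \eqref{t5-4-1} together with a Riemann-sum evaluation in spherical coordinates. Fix $\varepsilon\in(0,1)$, let $S_n(x)$ denote the expression inside the absolute value in \eqref{matix}, and write $S_n(x)=I_n(\varepsilon,x)+II_n(\varepsilon,x)$ according to $\{0<|z|\le n^{\varepsilon}\}$ and $\{n^{\varepsilon}<|z|\le n\}$. Using $|z_iz_j|\le|z|^2$, the hypothesis $\sup_{x,z}\Ee[\kappa(x,z)]<\infty$, and the standard lattice bound $\sum_{0<|z|\le R}|z|^{-d}\le c\log(R+1)$, one obtains $|I_n(\varepsilon,x)|\le C\varepsilon+o(1)$ uniformly in $x$ with $|x|\le R$. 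For $II_n$, condition \eqref{t5-4-1} yields $\Ee[\kappa(nx,z)]=K(z/n)+o(1)$ uniformly, so treating the remaining sum as a Riemann sum with mesh $1/n$ in $y=z/n$ and passing to spherical coordinates gives
$$II_n(\varepsilon,x)=\frac{1}{\log(1+n)}\int_{n^{\varepsilon-1}}^{1}\frac{dr}{r}\int_{\mathbb{S}^{d-1}}K(r\theta)\theta_i\theta_j\,d\theta+o(1).$$

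The decisive step is that \eqref{t5-4-1} actually forces $K(r\theta)=K(\theta)$ for every $r\in(0,1]$. For $\theta\in\mathbb{S}^{d-1}$ and $0<\mu\le\lambda\le 1$, pick $z_n\in\Z^d$ with $|z_n-\lambda n\theta|\le 1$. Applying \eqref{t5-4-1} at scale $n$ gives $\Ee[\kappa(nx,z_n)]\to K(\lambda\theta)$, and applying it at the enlarged scale $m_n:=(\lambda/\mu)n$ (one checks $m_n^{\varepsilon}\le|z_n|\le m_n$ for large $n$, and that $|nx|\le nR\le m_n R$) gives $\Ee[\kappa(nx,z_n)]\to K(\mu\theta)$; hence $K(\mu\theta)=K(\lambda\theta)$. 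Consequently the inner angular integral above equals $a_{ij}$ for every $r\in(0,1]$, and the displayed integral evaluates to $(1-\varepsilon)a_{ij}\log n$, so that $II_n(\varepsilon,x)\to(1-\varepsilon)a_{ij}$ uniformly in $x$ as $n\to\infty$.

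Combining the two pieces yields $\limsup_{n\to\infty}\sup_{|x|\le R}|S_n(x)-a_{ij}|\le(C+|a_{ij}|)\varepsilon$, and sending $\varepsilon\downarrow 0$ proves \eqref{matix}. The main obstacle is this hidden homogeneity of $K$ on the unit ball: although $K$ is only assumed bounded and continuous, the two-scale compatibility inherent in \eqref{t5-4-1} forces $K$ to be constant along rays, without which the Riemann-sum limit would produce a radial average of $K(r\theta)$ rather than $a_{ij}$ itself. The Riemann-sum passage and the crude bound on $I_n$ are otherwise routine once uniformity in $x$ is tracked throughout.
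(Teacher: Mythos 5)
Your proof is correct and follows essentially the same strategy as the paper: both establish the degree-$0$ homogeneity $K(r\theta)=K(\theta)$ from \eqref{t5-4-1} (you make explicit the two-scale compatibility argument that the paper leaves implicit in Remark \ref{R:1.4}(i)), then replace $\Ee[\kappa(nx,z)]$ by $K(z/n)$, pass to a Riemann integral, and evaluate in spherical coordinates. The only difference is bookkeeping: the paper first converts the full sum over $|z|\le n$ to one with $K(z/n)$ and then compares against $\int_{1/n\le|y|\le 1}$, whereas you keep the $\varepsilon$-cutoff $|z|>n^\varepsilon$ throughout and send $\varepsilon\downarrow 0$ at the very end.
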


\begin{remark}\label{R:1.4}
\begin{itemize}
\item[(i)] Just as that in Remark \ref{R:1.1},  condition \eqref{t5-4-1} together with the continuity of the function $K(z)$ on
$\R_{\bf 0}^d$ implies that $K(z)$ is a homogenous function of degree 0; that is,
$$
K( z) = K(z/|z|) \quad \hbox{for every } z \in \R_{\bf 0}^d.
$$

\item[(ii)]
To the best of our knowledge, all the existing literature including
references \cite{BCDG,BD,DG,DGR,GZ,Law} quoted above are concerned
with nearest neighbor random walks in balanced random environments. This is the first paper to investigate the quenched invariance principle for long range random walks in balanced random environments.
The reader is referred
to \cite{CKW} for the corresponding work on random conductance models with (symmetric) $\alpha$-stable-like jumps. Compared with assumptions in \cite{BCDG,BD,DG,DGR,GZ,Law},
our method can be applied
to non-ergodic environments since $\{\kappa(x,z)(\w): x\in \Z^d,z\in \Z^d_{+,*}, \w\in \Omega\}$
are not
required to be identically distributed.

 \item [(iii)]
The essential character of long range random walk in the present paper is that, the probability of jump from $x$ to $y$ is of the order $|x-y|^{-d-\alpha}$ with some $\alpha\in (0,2]$.
In particular, this indicates that the second moment of the process $X$ is infinite. Therefore,
we can not expect the scaling process to be a Brownian motion, or can not take the diffusive scaling to study the invariance principle. Note that, when $\alpha\in (0,2)$, $|x-y|^{-d-\alpha}$ is a typical transition density for $\alpha$-stable random walk on $\Z^d$. So, in this case it is natural to adopt the $\alpha$-stable scaling and expect the limit process to be a stable-like process.
Also due to this observation, the average of the sum for $\kappa(x,z)$ in large scale, which converges
to $\Ee[\kappa(x,z)]$ by the Borel-Cantelli arguments under the independent property of $\kappa(x,z)$ and some moments conditions on $\kappa(x,z)$, will be directly involved into the jumping kernel of limit process.
The case that $\alpha=2$ is distinct from $\alpha\in (0,2)$.
However,
by the similar method for $\alpha\in (0,2)$,
it holds that
with proper scaling (which is not the diffusive scaling) the limit process is a Brownian motion, whose coefficients are determined by $\Ee[\kappa(x,z)]$ as well. We should emphasize that the framework for $\alpha>2$ is completely different. Roughly speaking, when $\alpha>2$ the second moment of the process $X$ is finite. Even it is believed that by taking the diffusion scaling the limit process should be a
Brownian motion, we do not know how to prove it in general.
See Subsection \ref{e:diff} for more remarks.

\item[(iv)]
In order to establish the quenched invariance principle and
related results
for NNBRWs in either ergodic environments or i.i.d.\ environments,
some analytic tools, in particular the so-called Aleksandrov-Bakelman-Pucci (ABP) type estimates, play a crucial role; see \cite{BCDG,BD,DG,DGR,GZ,Law}. However, for the non-symmetric $\alpha$-stable-like
operator $\bar \LL$ defined by \eqref{l5-3-1a}, the ABP type estimates are still unknown except for a very special class of coefficients
$K(x,z)$, see \cite{GS} for more details.
In this paper, we
use a
probabilistic approach to tackle the quenched invariant principle. This approach
is completely different from those
in the above mentioned papers. We believe that our paper
provides another reasonable approach to study
quenched invariance principle for random walks in balanced random environments.
In particular, our approach
can efficiently handle the $\alpha=2$ case,
  which is of interest in its own.

\item[(v)]
  We emphasize that, unlike random conductance models as considered in \cite[Theorem 1.1]{CKW}, in this paper we do not require the inverse moment condition such as
$\sup_{x\in \Z^d, z\in \Z_{\bf0}^d}\Ee \left[  \kappa(x,z)^{-q}\right] <\infty$ for
some $q>0$. Some kind of non-degenerate condition is partly implied by the
uniqueness assumption
of the martingale problem for
$(\bar \LL, C^2_c(\R^d))$ of \eqref{l5-3-1a}.
For instance, all the literature we know concerning the uniqueness of the martingale problem for the non-local operator $\bar \LL$ defined by \eqref{l5-3-1a} are proved under some kind of non-degeneracy of the associated coefficients.  In particular, the uniqueness of the martingale problem for the  operator $\bar \LL$ defined by \eqref{l5-3-1a} along with \eqref{e:1.3}
could indicate non-degeneracy of the function   $x\mapsto \lim_{z\in\Z^d_{{\bf0}}\,\, \mathrm{ with }\,\, |z|\rightarrow \infty}\Ee[\kappa(x,z)]$.
Moreover, we do not need to assume the balanced condition \eqref{balacd}
for $0<\alpha<1$; see
Subsection \ref{subsecnonba} below.

\item[(vi)]
By \eqref{e:1.3} and the continuity of $K(x,z)$, it is easy to verify that
$K(x,z)=K(sx,sz)$
for every $x\in \R^d$, $z\in \R^d_{\bf0}$  and $s>0$,
which imply that the process $(Y_t)_{t\ge 0}$
whose distribution is the unique solution to the martingale problem for $(\bar {\LL}, C^2_c(\R^d))$ on $\D([0,\infty);\R^d)$ as in Theorem \ref{t5-2} (i)
satisfies the following scaling property
$$
(\lambda Y_{\lambda^{-\alpha}t})_{t\ge 0}\overset{d}=(Y_t)_{t\ge 0},\quad \lambda >0.
$$
Here  $\overset{d}=$  means the distribution of two processes are the same. Moreover,
the limiting operator $\bar \LL$ defined by \eqref{l5-3-1a}
may not be symmetric with respect to the Lebesgue measure, which is different from the
behavior of NNBRWs studied in \cite{BCDG,BD,DG,DGR,GZ,Law}. This non-symmetry nature of the limit operator $\bar \LL$ is due to the fact
that $\{\kappa(x,z)(\w): x\in \Z^d, z\in \Z^d_{+,*},\w\in \Omega\}$ may not be identically distributed.

\item[(vii)]
 By the proof below, we
see
 that the conclusion of Theorem \ref{t5-2} (ii)
holds true if the condition \eqref{matix} is replaced by  the following
\begin{equation}\label{r1-1}
\begin{split}
\lim_{n \rightarrow \infty}\sup_{x\in n^{-1}\Z^d: |x|\le R}\bigg|\frac{1}{\log (1+n)} \sum_{z\in \Z_{\bf0}^d: |z|\le n} \frac{z_iz_j\Ee[\kappa(nx,z)]}
{|z|^{d+2}}-a_{ij}(x)\bigg|=0
\end{split}
\end{equation} for all
integer $R\ge 1$ and $1\le i,j\le d$,
where $a_{ij}(x):\R^d\rightarrow \R$ is a bounded and continuous function. It further follows from \eqref{r1-1} and the continuity of $a_{ij}$ that
$$
a_{ij}(sx)=a_{ij}(x)  \quad \hbox{for every } x\in \R^d \hbox{ and }  s>0.
$$
In particular, for any $x\in \R^d$, $a_{ij}(x)=\lim_{s\to 0} a_{ij}(sx)=a_{ij}({\bf0})$; that is, $a_{i,j}(x)$ is a constant function.
Hence, \eqref{r1-1} is essentially equivalent to \eqref{matix}.

\item [(viii)]
Note that by \eqref{matix}, for every $\xi=(\xi_1,\cdots,\xi_d)\in \R^d$ and
$x\in \Z^d$, it holds that
\begin{equation}\label{e1-4}
\begin{split}
\sum_{i,j=1}^d
  a_{ij}  \xi_i\xi_j&=\lim_{n\to \infty}
\frac{1}{\log (1+n)} \sum_{z\in \Z_{\bf 0}^d: |z|\le n} \sum_{i,j=1}^d\frac{\xi_i \xi_j z_iz_j\Ee[\kappa(x,z)]}
{|z|^{d+2}}\\
&=\lim_{n\to \infty}\frac{1}{\log (1+n)}\sum_{z\in \Z_{\bf 0}^d: |z|\le n}\left(\sum_{i=1}^d \xi_iz_i\right)^2\frac{\Ee[\kappa(x,z)]}
{|z|^{d+2}}\ge 0.
\end{split}
\end{equation}
This immediately implies that
  $A=(a_{ij})_{1\le i,j\le d}$
 is non-negative definite. Furthermore, by \eqref{matix} and \eqref{e1-4}, it is not difficult to verify that
if $\liminf_{|z|\to \infty}\Ee[\kappa(x,z)]>0$ for every $x\in \Z^d$, then $\sum_{i,j=1}^d
  a_{ij}   \xi_i\xi_j>0$
for every non-zero $\xi \in \R^d$, and so $A$ is non-degenerate.
Similarly, if the function $K:\R_{\bf{0}}^d \to \R_+$ in Proposition \ref{t5-4} satisfies that
$K(\theta_0)>0$ for some $\theta_0\in \mathbb{S}^{d-1}$, then $A$ is also non-degenerate. Indeed,
by the continuity of $K$, there exists an open neighborhood of $U\subset \mathbb{S}^{d-1}$ containing $\theta_0$ such that
$\inf_{\theta\in U}K(\theta)>0$. On the other hand, for any non-zero $\xi\in \R^d$, we can find an open subset $U_0\subset U$ satisfying
that $\inf_{\theta\in U_0}|\langle \xi, \theta\rangle|>0$, where $\langle \xi, \theta\rangle:=\sum_{i=1}^d \xi_i\theta_i$.
Therefore, by Proposition \ref{t5-4},
$$\quad
\sum_{i,j=1}^d a_{ij}\xi_i\xi_j =\int_{\mathbb{S}^{d-1}}K(\theta)|\langle \xi, \theta\rangle|^2\,d\theta\ge \int_{U_0}K(\theta)|\langle \xi, \theta\rangle|^2\,d\theta\ge
\inf_{\theta\in U_0}\big(K(\theta)|\langle \xi, \theta\rangle|^2\big)\int_{U_0}d\theta>0.
$$
  \end{itemize}
\end{remark}

\medskip

The rest of the paper is organized as follows. In the next section, we consider a deterministic environment and establish the invariance principle for long range balanced
random walks. The main result for
 $\alpha\in (0,2)$ is Theorem \ref{t5-1}.
Our approach
is based on the tail probability estimates for the exit time of stable-like balanced random walks and
the martingale method.
When $\alpha\in (0,1)$, we can get rid of the balanced condition \eqref{balacd} with some slight modifications of the proof of Theorem \ref{t5-1};
  see Theorem \ref{t5-1*}.
 When $\alpha=2$, the main idea of the proof for Theorem \ref{t5-1} still works; see Theorem \ref{t5-1-1-2}.
 Section \ref{section3} is devoted to the proofs of Theorem \ref{t5-2} and
Proposition
\ref{t5-4}.
Some extensions and remarks of our main results are given in the last section.

\section{Invariance principle for long range balanced random walks}

In this section, we fix the environment $\w$ in $\{\kappa(x,z)(\w): x, z\in \Z^d\}$; in other words, we consider a deterministic environment and discuss the invariance principle for its corresponding purely discontinuous Markov process on
$\Z^d$.

Let $X:=(X_t)_{t\ge 0}$ be a Markov process on $\Z^d$ associated
with the following infinitesimal generator
$$
{\LL} f(x):=\sum_{z\in\Z_{\bf 0}^d}\big(f(x+z)-f(x)\big)\frac{\kappa(x,z)}{|z|^{d+\alpha}},\quad f\in
B_b(\Z^d),
$$
where $\alpha\in (0,2]$
and  $\kappa(\cdot,\cdot): \Z^d\times \Z_{\bf0}^d\to [0,\infty)$.
Denote by $\Pp_x$ the distribution of $X$ on
$\D([0,\infty);\Z^d)$ endowed with the Skorohod topology
and with initial point
 $x\in \Z^d$.

For any
$\alpha\in (0,2]$ and
$n\ge1$, consider the scaled process
$$
X^n:=\{X_t^{n}: t\ge 0\}:=\begin{cases}\{ {n}^{-1}X_{n^\alpha t}:  t\ge 0\},&\quad \alpha\in (0,2),\\
\left\{ {n}^{-1}X_{{n^2}t/{\log (1+n)}}:
t\ge 0\right\},&\quad \alpha=2,\end{cases}
$$
which takes values in $n^{-1}\Z^d$. Clearly $X^n$ is a strong Markov process on $n^{-1}\Z^d$,
and it is easy to check that it has the corresponding infinitesimal
generator
\begin{equation}\label{e:2.4}
{\LL}_n f(x) :=\begin{cases}
n^{-d}
\sum\limits_{z\in n^{-1}\Z_{\bf 0}^d}
\left(f(x+z)-f(x)\right) \frac{\kappa(nx,nz)}{|z|^{d+\alpha}},&\quad \alpha\in (0,2),\\
(n^d\log(1+n))^{-1}
\sum\limits_{z\in n^{-1}\Z_{\bf0}^d }
\left(f(x+z)-f(x)\right) \frac{\kappa(nx,nz)}{|z|^{d+2}},&\quad \alpha=2\end{cases}
\end{equation} acting on $f\in B_b(n^{-1}\Z^d).$
Denote by $\Pp_x^{n}$ the distribution of $X^n$ on
$\D([0,\infty);n^{-1}\Z^d)$
starting at  $x\in n^{-1}\Z^d$.

\subsection{Balanced case for $\alpha\in (0,2)$}

Throughout this subsection, we assume $\alpha\in (0,2)$ and
the balanced condition
\eqref{balacd}.

\medskip

{ \paragraph{{\bf Assumption (B1)}}\it
There exist constants $\theta\in (0,1)$, $C_1>0$ and $R_0\ge 1$
such that for
every $R>R_0$ and $r\in [R^{\theta},  R]$,
\begin{equation}\label{a5-2-1}
\sup_{x\in B({\bf0}, 2R)}\sum_{z\in \Z^d: 1\leq |z|\le r}
\frac{\kappa(x,z)}{|z|^{d+\alpha-2}}\le C_1 r^{2-\alpha},
\end{equation}
and
\begin{equation}\label{a5-2-2}
\begin{split}
\sup_{x \in B({\bf0},2R)}\sum_{z\in \Z^d:
|z|>r}\frac{\kappa(x,z)}{|z|^{d+\alpha}}\le C_1r^{-\alpha}.
\end{split}
\end{equation}
}

\begin{lemma}\label{l5-1}
Suppose that Assumption  {\bf(B1)} holds. Then, we have

{\rm(i)} There is a constant $c>0$ that depends only on
the constant $C_1$ in Assumption  {\bf(B1)}
so that for every  $R>R_0$,
$r\in [R^{\theta},  R]$, $x\in B({\bf0},R)$ and $t>0$,
\begin{equation}\label{l5-1-4}
\Pp_x(\tau_{B(x,r)}\le t)\le c\, t/r^{\alpha}.
\end{equation}
Here and in what follows, for any subset $A\subset \Z^d$,
$\tau_{A}=\inf\{t>0:
X_t\notin A\}$ is the first exit time from $A$ by the process
$X$.

{\rm(ii)}  $\{\Pp_{\bf0}^{n}\}_{n=1}^\infty$ is tight in $\D ([0,T];\R^d)$ for any $T>0$.
\end{lemma}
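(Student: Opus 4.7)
For part (i), my plan is to decompose the process into a compensated small-jump martingale part and a pure large-jump part. Fix $R>R_0$, $r\in[R^\theta,R]$ and $x\in B({\bf 0},R)$; since $r\le R$, one has $B(x,r)\subset B({\bf 0},2R)$, so $X_s\in B({\bf 0},2R)$ for every $s<\tau_{B(x,r)}$, which allows Assumption {\bf (B1)} to be invoked along the path up to the exit time. For each coordinate $i=1,\dots,d$, introduce
\begin{equation*}
M^{(i)}_t:=\sum_{s\le t}\big(X^{(i)}_s-X^{(i)}_{s-}\big)\I_{\{|X_s-X_{s-}|\le r\}}-\int_0^t\sum_{z\in\Z_{\bf 0}^d:\, |z|\le r} z_i\,\frac{\kappa(X_u,z)}{|z|^{d+\alpha}}\,du,
\end{equation*}
and let $N_t:=\sum_{s\le t}(X_s-X_{s-})\I_{\{|X_s-X_{s-}|>r\}}$ collect the large jumps. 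The balanced condition \eqref{balacd} forces the finite sum $\sum_{|z|\le r} z_i\,\kappa(X_u,z)/|z|^{d+\alpha}$ to vanish identically, so $M=(M^{(i)})_{i\le d}$ is a genuine $L^2$-martingale and $X_t-x=M_t+N_t$ on $[0,\tau_{B(x,r)})$. In particular,
\begin{equation*}
\Pp_x\big(\tau_{B(x,r)}\le t\big)\le \Pp_x\big(N_{t\wedge\tau_{B(x,r)}}\neq{\bf 0}\big)+\Pp_x\Big(\sup_{s\le t}\big|M_{s\wedge\tau_{B(x,r)}}\big|\ge r\Big).
\end{equation*}

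The first term is handled by dominating the large-jump part by a Poisson process whose intensity on $B({\bf 0},2R)$ is bounded by $C_1 r^{-\alpha}$ thanks to \eqref{a5-2-2}, so $\Pp_x(N_{t\wedge\tau_{B(x,r)}}\neq{\bf 0})\le C_1 t/r^\alpha$. For the second term, Doob's $L^2$-maximal inequality combined with the quadratic-variation formula and \eqref{a5-2-1} gives
\begin{equation*}
\Ee_x\Big[\sup_{s\le t}\big|M^{(i)}_{s\wedge\tau_{B(x,r)}}\big|^2\Big]\le 4\,\Ee_x\int_0^{t\wedge\tau_{B(x,r)}}\sum_{z\in\Z_{\bf 0}^d:\,|z|\le r}z_i^2\,\frac{\kappa(X_u,z)}{|z|^{d+\alpha}}\,du\le 4C_1 r^{2-\alpha} t.
\end{equation*}
Summing over $i$ and applying Chebyshev's inequality gives a bound of $c\,t/r^\alpha$ on the second term, yielding \eqref{l5-1-4}.

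For part (ii), I would apply Aldous' tightness criterion on $\D([0,T];\R^d)$. The scaling recorded in \eqref{e:2.4} rewrites (i) as $\Pp^n_y(\tau_{B(y,\rho)}\le s)\le c\,s/\rho^\alpha$, valid for $|y|\le R$ and $\rho\in[R^\theta n^{\theta-1},R]$ once $n$ is large enough. Compact containment follows by taking $\rho=N$, $y={\bf 0}$: $\Pp^n_{\bf 0}(\sup_{t\le T}|X^n_t|\ge N)\le cT/N^\alpha$. For the modulus-of-continuity condition, at any stopping time $\sigma\le T$ and $\delta'\in[0,\delta]$, the strong Markov property yields
\begin{equation*}
\Pp^n_{\bf 0}\big(|X^n_{\sigma+\delta'}-X^n_\sigma|\ge\eta\big)\le \Pp^n_{\bf 0}\Big(\sup_{t\le T}|X^n_t|\ge N\Big)+\sup_{|y|\le N}\Pp^n_y\big(\tau_{B(y,\eta)}\le\delta\big)\le \frac{cT}{N^\alpha}+\frac{c\delta}{\eta^\alpha},
\end{equation*}
valid once $n$ is large enough that $\eta\ge N^\theta n^{\theta-1}$. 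Sending $N\to\infty$ and then $\delta\to 0$ verifies Aldous' criterion.

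The main obstacle in (i) is justifying that the balanced condition \eqref{balacd} actually kills the small-jump drift without additional integrability: for $\alpha\in(0,1)$ the unrestricted sum $\sum_z z\,\kappa(x,z)/|z|^{d+\alpha}$ does not converge absolutely, but truncating at $|z|\le r$ turns it into a finite sum for which the antisymmetry $\kappa(x,z)=\kappa(x,-z)$ yields exact cancellation. For (ii), the only delicate point is the bookkeeping of the admissible range $\rho\in[R^\theta n^{\theta-1},R]$ under scaling, which imposes the large-$n$ condition $\eta\ge N^\theta n^{\theta-1}$; this is harmless once $N$ and $\eta$ are fixed first.
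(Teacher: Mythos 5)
Your proof of part (i) is correct but takes a genuinely different route from the paper. The paper's argument is analytic: it picks a smooth bump function $f_{x,r}$ vanishing on $B(x,r/2)$ and equal to $1$ off $B(x,r)$, bounds $\Pp_x(\tau_{B(x,r)}\le t)\le\Ee_x f_{x,r}(X_{t\wedge\tau_{B(x,r)}})$ by Dynkin's formula, then uses \eqref{balacd} to insert a gradient correction for $|z|\le r$ and applies \eqref{a5-2-1}, \eqref{a5-2-2} to show $\sup_{y\in B(x,r)}|\LL f_{x,r}(y)|\le c\,r^{-\alpha}$. Your decomposition $X-x=M+N$ into a compensated small-jump martingale and a large-jump part is a more probabilistic alternative: the balanced condition kills the truncated drift exactly (a finite sum, so no convergence issue, as you rightly note), \eqref{a5-2-1} controls the bracket of $M$ via $z_i^2/|z|^{d+\alpha}\le 1/|z|^{d+\alpha-2}$, and \eqref{a5-2-2} controls the intensity of the large-jump clock. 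Both arguments exploit in the same essential place that the pre-exit path lies in $B({\bf 0},2R)$, which is the domain on which Assumption {\bf(B1)} is stated, and both yield the same $t/r^\alpha$ bound with a constant depending only on $C_1$. One small stylistic point: the natural event to control via Poisson domination is that at least one large jump occurs in $[0,t\wedge\tau_{B(x,r)}]$, rather than $\{N_{t\wedge\tau_{B(x,r)}}\ne{\bf 0}\}$; your union bound is nevertheless valid because $N_{\tau}={\bf 0}$ forces $|M_\tau|=|X_\tau-x|\ge r$, but the former phrasing avoids having to make that observation. Part (ii) follows essentially the paper's argument (Aldous' criterion, compact containment plus the modulus estimate via scaled exit-time bounds), and your bookkeeping of the admissible radius window, $\eta\ge N^\theta n^{\theta-1}$ for large $n$, is the scaled form of the paper's requirement $n\eta>(nR)^\theta$.
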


\begin{proof}
(i) For any $r>0$ and $x\in \R^d$, take $f_{x,r}\in C_b^2(\R^d)$
such that
\begin{align*}
f_{x,r}(z)=
\begin{cases}
0,\ \ & 0\le |z-x|\le r/2,\\
\in [0,1],\ \ &r/2<|z-x|<r,\\
1,\ \ \ &|z-x|\ge r,
\end{cases}
\end{align*}
$\sup_{z\in \R^d}|\nabla f_{x,r}(z)|\le c_0r^{-1}$, and $\sup_{z\in \R^d}|\nabla^2 f_{x,r}(z)|\le c_0r^{-2}$ for some constant
$c_0>0$ independent of $r$ and $x$.
Then, for every $x\in B({\bf0},R)$,
$$
\Pp_x(\tau_{B(x,r)}\le t)\le \Ee_x f_{x,r}(X_{t\wedge
\tau_{B(x,r)}})  =\Ee_x\left[\int_0^{t\wedge \tau_{B(x,r)}}
{\LL}f_{x,r}(X_s)\,ds\right].
$$

Let $R_0\ge1$ be the constant in Assumption {\bf(B1)}. For every
$R>R_0$, $r\in [R^{\theta},  R]$, $x\in B({\bf0},R)$ and $y\in B(x,r)$,
\begin{equation}\label{e:key}\begin{split}
{\LL}f_{x,r}(y)=&\sum_{z\in \Z^d:1\le |z|\le
r}\big(f_{x,r}(y+z)-f_{x,r}(y)-\nabla f_{x,r}(y)\cdot z \big)
\frac{\kappa(y,z)}{|z|^{d+\alpha}}\\
&+\sum_{z\in \Z^d:|z|\ge r}\big(f_{x,r}(y+z)-f_{x,r}(y)\big)
\frac{\kappa(y,z)}{|z|^{d+\alpha}}\\
=&:I_{1,r}+I_{2,r},
\end{split}\end{equation}
where we used \eqref{balacd}
in the first equality.

According to \eqref{a5-2-1}, for every $R>R_0$, $r\in [R^{\theta},  R]$, $x\in B({\bf0},R)$ and $y\in
B(x,r)$,
\begin{align*}
|I_{1,r}|&\le \frac 1 2 \|\nabla^2 f_{x,r}\|_\infty
\sum_{z\in \Z^d: 1\le |z|\le r}\frac{\kappa(y,z)}{|z|^{d+\alpha-2}}\le
\frac {c_0} 2 r^{-2}\sup_{y\in B({\bf0},2R)}\sum_{z\in \Z^d: 1\le |z|\le
r}\frac{\kappa(y,z)}{|z|^{d+\alpha-2}} \le
\frac {c_0 C_1} 2 r^{-\alpha}.
\end{align*}
Similarly, we have by \eqref{a5-2-2},
$$
\sup_{x\in B({\bf0},R), \, y\in B(x,r)}|I_{2,r}|\le 2C_1 r^{-\alpha}.
$$
Hence, for every $R>R_0$ and $r\in [R^{\theta},  R]$,
\begin{equation}\label{e:2.14}
\sup_{x\in B({\bf0},R),y\in B(x,r)}|{\LL}f_{x,r}(y)|\le 2
(1+c_0/4) C_1 r^{-\alpha}.
\end{equation}
Combining all the estimates above,
we obtain \eqref{l5-1-4}.

(ii) For a Borel subset $A\subset
n^{-1}\Z^d$, let $\tau^n_A:=\inf\{t>0:X_t^n \notin A\}$ be the first exit
time from $A$ by the process
$X^n$. By the fact
that $X_t^n=n^{-1}X_{n^\alpha t}$ and \eqref{l5-1-4}, for every
fixed integer $R\geq 1$
and $n>{R_0}/{R}$, we have
\begin{align*}
\Pp^n_{\bf0}\left(\sup_{t\in [0,T]}|X_t^n|>R\right)
&\le \Pp^n_{\bf0} \left(\tau^n_{B({\bf0},R)}\le T\right)
=\Pp_{\bf0}\left(\tau_{B({\bf0},nR)}\le n^\alpha T\right)\le c \, n^\alpha T(nR)^{-\alpha}=c T/R^{\alpha}.
\end{align*}
Consequently,
\begin{equation}\label{l5-1-1}
\lim_{R\rightarrow \infty}\limsup_{n \rightarrow
\infty}\Pp_{\bf0}^n\left(\sup_{t\in [0,T]}|X_t^n|>R\right)=0.
\end{equation}

On the other hand, for any  $\eta>0$, any sequence of stopping times
$\{\tau_n\}_{n\ge1}$ of $\{X^n\}_{n\ge1}$ such that $\tau_n\le T$, and any sequence $\{\e_n\}_{n\ge1}$
such that $\lim_{n \rightarrow \infty}\e_n=0$, it follows from the
strong Markov property of $X^n$ that
\begin{align*}
 \Pp_{\bf0}^n\big(|X_{\tau_n+\e_n}^n-X_{\tau_n}^n|>\eta\big)
&=
\Ee_{\bf0}^n\big[ \Pp^n_{X_{\tau_n}^n}\big(|X^n_{\e_n}-X^n_0|>\eta\big)\big] \\
&\le \Pp_{\bf0}^n\big(\tau_{B({\bf0},R)}^n\le T\big)+\sup_{x\in B({\bf0},R)}\Pp^n_x\big(\tau_{B(x,\eta)}^n\le \e_n\big)\\
&=\Pp_{\bf0}\big(\tau_{B({\bf0},nR)}\le n^\alpha T\big)+ \sup_{x\in
B({\bf0},nR)}\Pp_x\big(\tau_{B({\bf0},n\eta)}\le n^\alpha\e_n\big),
\end{align*}
where in the first inequality we used the fact $\tau_n\le T$ for the second term. Taking
$n$ large enough so that $nR>R_0$ and $n\eta>(n R)^\theta$, we get
from \eqref{l5-1-4} that
$$\limsup_{n \rightarrow \infty}\Pp_{\bf0}^n\big(|X_{\tau_n+\e_n}^n-X_{\tau_n}^n|>\eta\big)\\
\le c\limsup_{n \rightarrow \infty} \left(\frac{n^\alpha
T}{(nR)^\alpha} +\frac{n^\alpha \e_n}{(n\eta)^\alpha}\right)\le
cT/R^{\alpha},
$$
which, by taking $R\to \infty$, yields
\begin{equation}\label{l5-1-2}
\limsup_{n\rightarrow
\infty}\Pp_{\bf0}^n\big(|X^n_{\tau_n+\e_n}-X^n_{\tau_n}|>\eta\big)=0.
\end{equation}
The desired tightness assertion now follows from \eqref{l5-1-1}, \eqref{l5-1-2} and \cite[Theorem 1]{Ad}.
\end{proof}

\begin{lemma}\label{l5-2}
Under Assumption {\bf(B1)}, for every $f\in C_c^2(\R^d)$,
\begin{equation}\label{l5-2-1}
\sup_{n\geq 1} \sup_{x\in n^{-1}\Z^d}|{\LL}_nf(x)|<\infty,
\end{equation} and
\begin{equation}\label{l5-2-1a}
\lim_{R \rightarrow \infty}\limsup_{n \rightarrow \infty}\sup_{x\in
n^{-1}\Z^d: |x|\ge R}|{\LL}_n f(x)|=0.
\end{equation}
 \end{lemma}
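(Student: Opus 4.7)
The plan is to change variable $w=nz \in \Z^d_{\bf 0}$ and rewrite
$$
\LL_n f(x) = n^\alpha \sum_{w\in \Z^d_{\bf 0}} \big(f(x+w/n)-f(x)\big)\frac{\kappa(nx,w)}{|w|^{d+\alpha}},
$$
split the sum at $|w|=n$ (small vs.\ large jumps at the original scale) and handle $|x|\le T$ and $|x|>T$ separately, where $T\ge 1$ is fixed with $\mathrm{supp}\,f\subset B({\bf 0},T/2)$ and $T\ge \max(R_0,2^{1/(1-\theta)})$. Set $n_0:=\lceil T^{\theta/(1-\theta)}\vee(R_0/T)\rceil$.

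In the bounded regime $|x|\le T$ with $n\ge n_0$: on $\{1\le |w|\le n\}$ the balanced condition \eqref{balacd} lets me symmetrize and kill the linear term $\nabla f(x)\cdot (w/n)$, so Taylor's inequality bounds the corresponding partial sum by $\tfrac12\|\nabla^2 f\|_\infty\, n^{\alpha-2}\sum_{1\le |w|\le n}\kappa(nx,w)|w|^{-(d+\alpha-2)}$; applying \eqref{a5-2-1} with $R=nT$ and $r=n$ yields $\tfrac{C_1}{2}\|\nabla^2 f\|_\infty$ (the constraint $r\in[R^\theta,R]$ reduces to $(nT)^\theta\le n$, which is exactly why we need $n\ge n_0$). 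On $\{|w|>n\}$ I use $|f(x+w/n)-f(x)|\le 2\|f\|_\infty$ together with \eqref{a5-2-2} for the same $(R,r)$, producing $2C_1\|f\|_\infty$.

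In the far-field regime $|x|>T$ the key observation is that $f(x)=0$ and that $f(x+w/n)\neq 0$ forces $|w|\ge n(|x|-T/2)\ge n|x|/2$, whence
$$
|\LL_n f(x)|\le \|f\|_\infty\, n^\alpha\sum_{|w|\ge n|x|/2}\frac{\kappa(nx,w)}{|w|^{d+\alpha}}.
$$
Applying \eqref{a5-2-2} with $R=n|x|$ and $r=n|x|/2$ (valid because $n|x|>T\ge \max(R_0,2^{1/(1-\theta)})$) gives $|\LL_n f(x)|\le C_1\, 2^\alpha\|f\|_\infty\, |x|^{-\alpha}$. This is uniform in $n$ on this regime and, by taking $\sup_{|x|\ge R}$ and sending $R\to\infty$, immediately yields \eqref{l5-2-1a}.

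The two regimes together prove \eqref{l5-2-1} for $n\ge n_0$. The remaining finitely many cases $n<n_0$ are handled by noting that $\{x\in n^{-1}\Z^d:|x|\le T\}$ is finite and each $|\LL_n f(x)|$ is pointwise finite (a.s.\ finiteness of each $\kappa(nx,\cdot)$ plus convergence of the tail via \eqref{a5-2-2} applied with $R$ large and $r=R^\theta$), while the far-field bound of the previous paragraph covers $|x|>T$ verbatim. The main technical obstacle is precisely the scale-matching that forces $n\ge n_0$: we need $r=n$ (so that the small-jump window exactly equals $\{|w|\le n\}$) while simultaneously $r\ge R^\theta$ with $R$ large enough to contain $nx$, which dictates $R=nT$ and the threshold $n^{1-\theta}\ge T^\theta$; an entirely analogous but cleaner matching $(R,r)=(n|x|,n|x|/2)$ makes the far-field bound work for every $n$.
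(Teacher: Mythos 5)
Your argument is correct and follows essentially the same route as the paper's proof: you split at the scale $|w|=n$, use the balanced condition and a second-order Taylor bound together with \eqref{a5-2-1} for the small jumps, use \eqref{a5-2-2} for the large jumps, and in the far-field regime $|x|>T$ exploit the compact support of $f$ to restrict to $|w|\gtrsim n|x|$. The only (minor) deviation is the far-field estimate: you invoke \eqref{a5-2-2} directly with $(R,r)=(n|x|,n|x|/2)$ to obtain $|\LL_n f(x)|\le C_1 2^\alpha\|f\|_\infty |x|^{-\alpha}$, whereas the paper instead bounds $|z|^{-(d+\alpha)}\le (n|x|/2)^{-2}|z|^{-(d+\alpha-2)}$ on the relevant annulus and applies \eqref{a5-2-1}; both yield the same $|x|^{-\alpha}$ decay with the same order of effort, and your version has the small advantage of working for every $n\ge 1$ in that regime, so the only scale-matching constraint is in the bounded regime $|x|\le T$ as you correctly isolate. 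Your treatment of the finitely many $n<n_0$ via pointwise finiteness and the finiteness of $\{|x|\le T\}\cap n^{-1}\Z^d$ is a valid way to close \eqref{l5-2-1}; the paper simply absorbs this into the phrase ``for $n$ large enough,'' so no substantive difference.
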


 \begin{proof}
 Fix $f\in C_c^2(\R^d)$.
Suppose that $\text{supp}(f)\subset B({\bf0},N_0)$ for some $N_0\ge1$.
Then for every $x\in B({\bf0},4N_0)\cap n^{-1}\Z^d$, we have by \eqref{e:2.4} that
for $n$ large enough,
\begin{equation}\label{eq:grad2e}
\begin{split}
|{\LL}_n f(x)|
&\le   n^{-d}\sum_{z\in n^{-1}\Z_{\bf0}^d: |z|\le 1}\Big|f(x+z)-f(x)-\nabla f(x)\cdot z\Big|\frac{\kappa(nx,nz)}{|z|^{d+\alpha}} \\
&  \quad +n^{-d}\sum_{z\in n^{-1}\Z^d: |z|> 1}\Big|f(x+z)-f(x)\Big|\frac{\kappa(nx,nz)}{|z|^{d+\alpha}} \\
&\le
\frac 1 2 n^{\alpha-2}\|\nabla^2 f\|_\infty\sup_{x\in B({\bf0},4nN_0)}
\sum_{z\in \Z_{\bf0}^d: |z|\le n}\frac{\kappa(x,z)}{|z|^{d+\alpha-2}} \\
&  \quad + 2n^{\alpha}\|f\|_\infty \sup_{x\in B({\bf0},4nN_0)}\sum_{z\in
\Z^d: |z|>n}\frac{\kappa(x,z)}{|z|^{d+\alpha}} \\
&\leq
\frac {C_1} 2 n^{\alpha-2}\|\nabla^2 f\|_\infty \, n^{2-\alpha} + 2C_1 n^{\alpha}\|f\|_\infty n^{-\alpha}
  =   2C_1 (\|\nabla^2 f\|_\infty + \| f \|_\infty),
\end{split}
\end{equation}
where we used \eqref{balacd} in the first inequality, and
\eqref{a5-2-1} and \eqref{a5-2-2} in the last inequality.

On the other hand, since $f$ is supported in $B({\bf0}, N_0)$, if $|x|>4N_0$, then $f(x+z)-f(x)= 0$ when
$|x+z|>N_0$. Hence for any $|x|>4N_0$ and $n$ large enough, we have by \eqref{e:2.4} that
\begin{equation}\label{l5-2-2}
\begin{split}
|{\LL}_n f(x)|
&\le \|f\|_\infty \, n^{-d} \sum_{z\in n^{-1}\Z_{\bf0}^d: \, |x+z|\leq N_0}
\frac{\kappa(nx,nz)}{|z|^{d+\alpha}}\le \|f\|_\infty \, n^{-d}\sum_{z\in n^{-1}\Z^d: \,
|x|/2\le |z|\le 2|x|}\frac{\kappa(nx,nz)}{|z|^{d+\alpha}}\\
&\le \|f\|_\infty \, n^{\alpha} \sum_{z\in \Z^d: \, {n|x|}/{2}\le |z|\le 2n|x|}\frac{\kappa(nx,z)}{|z|^{d+\alpha}}\le 4\|f\|_\infty \, n^\alpha |nx|^{-2}  \sum_{z\in \Z_{\bf 0}^d: \, |z|\le 2n|x|}
\frac{\kappa(
nx,z)}{|z|^{d+\alpha-2}}\\
&\le 4C_1 \|f\|_\infty \, n^{\alpha-2}  |x|^{-2} (2n|x|)^{2-\alpha} =   c_1\|f\|_\infty|x|^{-\alpha},
\end{split}
\end{equation}
where we have used  \eqref{a5-2-1} in the last inequality. This proves
\eqref{l5-2-1a}, and along with \eqref{eq:grad2e} also  yields
\eqref{l5-2-1}.
 \end{proof}

 We need the following assumption for the convergence of
 $\{X^n\}_{n\ge1}$.

\medskip

{ \paragraph{{\bf Assumption (B2)}}  \it
There exists a bounded continuous function $K(x,z):\R^d\times
\R_{\bf0}^d\rightarrow (0,\infty)$ such that $K(x,z)=K(x,-z)$ for all $x\in \R^d$ and $z\in
\R_{\bf0}^d$, and that
 for  any integer $R\geq 1$
 and any $f\in C_c^2(\R^d)$,
\begin{equation}\label{a5-3-1}
  \liminf_{\varepsilon \to 0}
  \lim_{n \rightarrow \infty}
n^\alpha\sup_{x\in \Z^d: \atop{ |x|\le nR}}
\Bigg| \sum_{z\in \Z^d:\atop {
 n \e <|z|<n/\e } }
\left(f\Big(\frac{x+z}{n}\Big)-f\left(\frac{x}{n}\right)\right)
\left(\frac{\kappa(x,z)-K(\frac{x}{n},\frac{z}{n})}{|z|^{d+\alpha}}\right) \Bigg| =0.
\end{equation}
}

\medskip

Clearly  in the above assumption, the phase
``for
any  integer $R\geq 1$'' can be replaced by ``for
any constant $R>0$''.

\medskip

\begin{lemma}\label{l5-3}
Suppose that Assumptions {\bf (B1)} and {\bf(B2)} hold. Then for
any $f\in C_c^2(\R^d)$,
\begin{equation}\label{l5-3-1}
\lim_{n \rightarrow \infty}\sup_{x\in n^{-1}\Z^d}|{\LL}_nf(x)-\bar
{\LL}f(x)|=0,
\end{equation}
where the operator $\bar {\LL}$ is defined by \eqref{l5-3-1a}.
\end{lemma}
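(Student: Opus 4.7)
My plan is to decompose $\LL_nf(y)-\bar\LL f(y)$ according to two regimes for the base point $y$ and three regimes for the jump variable $w$. Let $N_0\ge 1$ be such that $\mathrm{supp}(f)\subset B({\bf 0},N_0)$ and fix $N_*\gg N_0$. On the \emph{far region} $|y|\ge N_*$, the bound $\limsup_{n\to\infty}\sup_{|y|\ge N_*}|\LL_n f(y)|\to 0$ (as $N_*\to\infty$) is exactly \eqref{l5-2-1a} from Lemma~\ref{l5-2}, while a parallel direct estimate that exploits $f(y)=\nabla f(y)=0$, the support condition on $f$, and boundedness of $K$ (Remark~\ref{R:1.1}) shows $|\bar\LL f(y)|\le C\|f\|_\infty\|K\|_\infty N_0^d|y|^{-(d+\alpha)}$. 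It therefore suffices to establish uniform convergence on the \emph{near region} $\{y\in n^{-1}\Z^d:|y|\le N_*\}$.

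On the near region, I fix $\varepsilon\in(0,1)$ and split both operators into inner ($|w|\le\varepsilon$), middle ($\varepsilon<|w|<1/\varepsilon$) and outer ($|w|\ge 1/\varepsilon$) pieces, writing $\LL_nf=A_n^\varepsilon+B_n^\varepsilon+C_n^\varepsilon$ and $\bar\LL f=\bar A^\varepsilon+\bar B^\varepsilon+\bar C^\varepsilon$. The balance conditions \eqref{balacd} and \eqref{K-ban} let me replace $f(y+w)-f(y)$ by $f(y+w)-f(y)-\nabla f(y)\cdot w=O(|w|^2)$ in the inner piece and cancel the linear correction in the middle and outer pieces. Then, after the change of variable $z=nw$, Assumption \textbf{(B1)}---specifically \eqref{a5-2-1} with $R=nN_*$ and $r=n\varepsilon$ (valid once $n\ge (N_*^{\theta}/\varepsilon)^{1/(1-\theta)}$)---yields $|A_n^\varepsilon(y)|\le \tfrac12 C_1\|\nabla^2f\|_\infty\varepsilon^{2-\alpha}$. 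The outer piece is bounded using $|f(y+w)-f(y)|\le 2\|f\|_\infty$ together with \eqref{a5-2-2} applied with $R=r=n/\varepsilon$, yielding $|C_n^\varepsilon(y)|\le 2C_1\|f\|_\infty\varepsilon^{\alpha}$. Boundedness of $K$ gives the analogous integral estimates $|\bar A^\varepsilon(y)|\le C\varepsilon^{2-\alpha}$ and $|\bar C^\varepsilon(y)|\le C\varepsilon^{\alpha}$ uniformly in $y$.

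The middle piece is handled by inserting the intermediate quantity $T_n^\varepsilon(y):=n^{-d}\sum_{w\in n^{-1}\Z_{\bf 0}^d:\,\varepsilon<|w|<1/\varepsilon}(f(y+w)-f(y))\frac{K(y,w)}{|w|^{d+\alpha}}$. The difference $B_n^\varepsilon-T_n^\varepsilon$, after undoing the scaling $x=ny$ and $z=nw$, matches exactly the expression inside the absolute value of \eqref{a5-3-1}; so Assumption \textbf{(B2)} delivers a sequence $\varepsilon_k\downarrow 0$ along which $\lim_{n\to\infty}\sup_{|y|\le N_*}|B_n^{\varepsilon_k}(y)-T_n^{\varepsilon_k}(y)|=0$. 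The remaining difference $T_n^\varepsilon-\bar B^\varepsilon$ is the error of a Riemann-sum approximation for the integrand $(y,w)\mapsto (f(y+w)-f(y))K(y,w)|w|^{-(d+\alpha)}$, which is jointly continuous on the compact set $\overline{B({\bf 0},N_*)}\times\{\varepsilon\le|w|\le 1/\varepsilon\}$ and hence uniformly continuous; this forces $T_n^\varepsilon\to\bar B^\varepsilon$ uniformly in $y$ as $n\to\infty$. Given any $\delta>0$, I pick $N_*$ first (to control the far region), then $\varepsilon=\varepsilon_k$ small in the \textbf{(B2)}-sequence (to make the four tail-annulus bounds small and to set up \textbf{(B2)}), and finally $n$ large; assembling the six bounds yields \eqref{l5-3-1}. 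The main technical care is in the quantifier ordering, so that the $\varepsilon$-dependent parameter restrictions of \textbf{(B1)} (and the liminf-along-a-subsequence nature of \textbf{(B2)}) remain compatible at the moment the $n$-limit is taken; beyond that bookkeeping, each individual estimate is routine.
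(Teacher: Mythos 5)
Your proof is correct and follows essentially the same route as the paper's: both decompose $\LL_n f - \bar\LL f$ into a far region in the base point controlled by \eqref{l5-2-1a} and compact support, together with inner/middle/outer jump scales on the near region handled respectively by \eqref{a5-2-1}, Assumption \textbf{(B2)} plus a Riemann-sum/uniform-continuity argument, and \eqref{a5-2-2}, with the balance of $\kappa$ and $K$ used to insert the gradient correction. Your intermediate quantity $T_n^\varepsilon$ is precisely the paper's $\bar\LL_{n,\eta}f$, and your handling of the $\liminf$-along-a-subsequence structure of \textbf{(B2)} matches the paper's use of the sequence $\{\varepsilon_k\}$, so this is a faithful reconstruction rather than a genuinely different argument.
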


\begin{proof}
For every $\eta\in (0,1)$, set for $x\in n^{-1}\Z^d$
\begin{align*}
{\LL}_{n,\eta}f(x):&=n^{-d}\sum_{z\in n^{-1}\Z^d: \atop{\eta<|z|<1/\eta }}
\big(f(x+z)-f(x)\big)\frac{\kappa(nx,nz)}{|z|^{d+\alpha}} \\
&=
n^\alpha\sum_{z\in \Z^d: n\eta<|z|<n\eta^{-1}}
\Big(f\Big(x+\frac{z}{n}\Big)-f(x)\Big)\frac{\kappa(nx,z)}{|z|^{d+\alpha}},\\
\bar {\LL}_{n,\eta}f(x):&=n^{-d}\sum_{z\in n^{-1}\Z^d:
\atop{\eta<|z|<1/\eta}}
\big(f(x+z)-f(x)\big)\frac{K(x,z)}{|z|^{d+\alpha}} \\
&=
n^\alpha\sum_{z\in \Z^d:
n\eta<|z|<n\eta^{-1}}
\Big(f\Big(x+\frac{z}{n}\Big)-f(x)\Big)\frac{K(x, {z}/{n})}{|z|^{d+\alpha}}
\end{align*}
and for $x\in \R^d$,
\begin{equation}\label{l5-3-2}
\bar {\LL}_\eta
f(x) :=\int_{\{z\in \R^d:\eta<|z|<\eta^{-1}\}}\big(f(x+z)-f(x)\big)\frac{K(x,z)}{|z|^{d+\alpha}}\,dz.
\end{equation}
For  every $R>1$ and $\eta\in (0,1)$,
\begin{align}
&\sup_{x\in n^{-1}\Z^d}|{\LL}_nf(x)-\bar {\LL}f(x)|\nonumber\\
&\le \sup_{x\in n^{-1}\Z^d: {|x|\le R}}|{\LL}_{n,\eta}f(x)-\bar {\LL}_\eta
f(x)|+
\sup_{x\in n^{-1}\Z^d: {|x|>R}}|{\LL}_n f(x)|+ \sup_{x\in n^{-1}\Z^d:
{|x|>R}}|\bar {\LL} f(x)|\nonumber\\
&\quad+n^{-d}
\sup_{x\in n^{-1}\Z^d: {|x|\le R}}\Big|\sum_{z\in
n^{-1}\Z_{\bf 0}^d : {|z|\le \eta}}
\big(f(x+z)-f(x)-\nabla f(x)\cdot z\big)\frac{\kappa(nx,nz)}{|z|^{d+\alpha}}\Big|\nonumber\\
&\quad+n^{-d}
\sup_{x\in n^{-1}\Z^d: {|x|\le R}} \Big|\sum_{z\in
n^{-1}\Z^d: {|z|\ge \eta^{-1}}}
\big(f(x+z)-f(x)\big)\frac{\kappa(nx,nz)}{|z|^{d+\alpha}}\Big|\nonumber\\
&\quad+
\sup_{x\in n^{-1}\Z^d: {|x|\le R}}\Big|\int_{\{0<|z|\le
\eta\}\cup \{ |z|\ge \eta^{-1}\}}
\big(f(x+z)-f(x)-\nabla f(x)\cdot z
\I_{\{|z|\le 1\}}
\big)\frac{K(x,z)}{|z|^{d+\alpha}}\,dz\Big|\nonumber\\
&=:I_1^{n,R,\eta}+I_2^{n,R}+I_3^{n,R}+I_4^{n,R,\eta}+I_5^{n,R,\eta}+I_6^{n,R,\eta}.\label{eq:i1-i6}
\end{align}
Note that due to
balanced
conditions
\eqref{balacd}
and \eqref{K-ban},
we may add the gradient term
$\nabla f(x)\cdot z$ in the summation.

By \eqref{a5-3-1}, there is a  sequence of positive numbers $ \{\e_k\}_{k\ge1} \subset (0,1)$ that decreases to $0$
so that
\begin{equation}\label{e:2.17}
    \lim_{k\to \infty}
  \lim_{n \rightarrow \infty}
n^\alpha\sup_{x\in \Z^d: \atop{ |x|\le nR}}
\Bigg| \sum_{z\in \Z^d:\atop {
   n \e_k<|z|<n/\e_k} } \left(f\Big(\frac{x+z}{n}\Big)-f\left(\frac{x}{n}\right)\right)
\left(\frac{\kappa(x,z)-K(\frac{x}{n},\frac{z}{n})}{|z|^{d+\alpha}}\right) \Bigg| =0.
\end{equation}
Thus we have
$$
\liminf_{k\to \infty}\lim_{n \rightarrow \infty}\sup_{x\in n^{-1}\Z^d: |x|\le
R}|{\LL}_{n,  \e_k}f(x)-\bar {\LL}_{n, \e_k}f(x)|=0.
$$
Since $K(x,z)$ is uniformly continuous on $\{(x,z)\in \R^{2d}:
|x|\le R \textrm{ and }  \e_k<|z|< \e_k^{-1}\}$  for fixed $k\ge1$,  it is routine to show
that for  any $f\in C_c^2(\R^d)$  and $k\ge1$,
$$
\lim_{n \rightarrow \infty}\sup_{x\in n^{-1}\Z^d: |x|\le R}|\bar
{\LL}_{n, \e_k}f(x)-\bar {\LL}_{ \e_k}f(x)|=0.
$$
Hence, for any $R>1$,  $$
\liminf_{k\to \infty}\lim_{n\rightarrow \infty}I_1^{n,R, \e_k}=0.
$$

On the other hand, by \eqref{l5-2-1a},
$$
\lim_{R \rightarrow \infty}\limsup_{n \rightarrow \infty}I_2^{n,R}=0.
$$
Following the proof for \eqref{l5-2-1},
and applying \eqref{a5-2-1}
and \eqref{a5-2-2} respectively, we can get
$$\limsup_{n \rightarrow \infty}
I_4^{n,R,\eta}
\le c_1\eta^{2-\alpha},~~~\mbox{ and }~~~
\limsup_{n \rightarrow\infty}
I_5^{n,R,\eta}\le c_1\eta^{\alpha}.
$$
Since $K$ is bounded, it is obvious that
$$\lim_{R\rightarrow \infty}\limsup_{n \rightarrow
\infty}I_3^{n,R}=0,
~~~\mbox{ and }~~~
\limsup_{n \rightarrow\infty}
I_6^{n,R,\eta}\le
c_2\big(\eta^{2-\alpha}+\eta^\alpha\big).
$$
 Now, we take $\eta=\e_k$ in the estimate \eqref{eq:i1-i6}. Combining all estimates above
with \eqref{eq:i1-i6},
first letting $n \rightarrow \infty$, then taking $R \rightarrow \infty$ and
$k\to\infty$,
we obtain
\eqref{l5-3-1}.
\end{proof}

Now, we can state the main result in this subsection.

\begin{theorem}\label{t5-1}
Suppose that Assumptions {\bf(B1)} and {\bf(B2)} hold, and the solution of the  martingale problem for $(\bar {\LL}, C^2_c(\R^d))$
defined by \eqref{l5-3-1a} is unique. Then, for every $T>0$, $\Pp^{n}_{\bf0}$ converges weakly to $\bar \Pp_{\bf0}$, where $\bar \Pp_{\bf0}$ denotes the
distribution $($restricted on the time interval $[0,T]$$)$ of the
unique solution to the martingale problem of the operator $(\bar {\LL}, C^2_c(\R^d))$.
\end{theorem}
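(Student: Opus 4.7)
The plan follows the classical martingale-problem strategy: tightness, identification of any subsequential weak limit as a solution of the limiting martingale problem, and uniqueness.

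\emph{Tightness} is exactly Lemma \ref{l5-1}(ii): $\{\Pp^n_{\bf0}\}_{n\ge 1}$ is tight on $\D([0,T];\R^d)$, so from any subsequence I can extract a further subsequence (still written $n_k$) with $\Pp^{n_k}_{\bf0}\Rightarrow \Pp^*$ for some probability measure $\Pp^*$ on $\D([0,T];\R^d)$. Since $X^n_0={\bf0}$ deterministically, $\Pp^*(X_0={\bf0})=1$.

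\emph{Identification.} Fix $f\in C_c^2(\R^d)$. Because $X^n$ has generator $\LL_n$, the process
$$
M^{n,f}_t:=f(X^n_t)-f(X^n_0)-\int_0^t\LL_n f(X^n_s)\,ds
$$
is a $\Pp^n_{\bf0}$-martingale. I would select $0\le s_1<\cdots<s_m\le s<t\le T$ from the cocountable subset of $[0,T]$ at which the canonical coordinate process of $\D([0,T];\R^d)$ is $\Pp^*$-a.s.\ continuous, together with $g_1,\ldots,g_m\in C_b(\R^d)$, and start from
$$
\Ee^{n}_{\bf0}\Bigl[(M^{n,f}_t-M^{n,f}_s)\prod_{i=1}^m g_i(X^n_{s_i})\Bigr]=0.
$$
Then I pass to the limit along $n_k$. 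The point-evaluation pieces $f(X^{n_k}_t)$, $f(X^{n_k}_s)$, $g_i(X^{n_k}_{s_i})$ converge jointly in law to $f(X_t)$, $f(X_s)$, $g_i(X_{s_i})$ by continuous mapping at the chosen continuity times. For the drift term, Lemma \ref{l5-3} yields $\sup_{x\in n_k^{-1}\Z^d}|\LL_{n_k}f(x)-\bar\LL f(x)|\to 0$, while $\bar\LL f$ is bounded and continuous on $\R^d$: boundedness follows from \eqref{l5-2-1}, and continuity from dominated convergence applied to the representation \eqref{l5-3-1a} using the boundedness and continuity of $K(x,z)$. Because for any bounded continuous $h$ the functional $\gamma\mapsto\int_s^t h(\gamma(u))\,du$ on $\D([0,T];\R^d)$ is Skorokhod-continuous (the discontinuity set of a cadlag path is Lebesgue-null), the estimate
$$
\Bigl|\int_s^t\LL_{n_k}f(X^{n_k}_u)\,du-\int_s^t\bar\LL f(X^{n_k}_u)\,du\Bigr|\le (t-s)\,\|\LL_{n_k}f-\bar\LL f\|_\infty\longrightarrow 0
$$
combined with the continuous mapping theorem yields $\int_s^t\LL_{n_k}f(X^{n_k}_u)\,du\to\int_s^t\bar\LL f(X_u)\,du$ in law. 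Lemma \ref{l5-2} ensures that the whole integrand is uniformly bounded (by $2\|f\|_\infty+T\sup_n\|\LL_n f\|_\infty$ times $\prod_i\|g_i\|_\infty$), so bounded convergence upgrades all of these convergences in law to convergence of expectations, producing
$$
\Ee^*\Bigl[(M^f_t-M^f_s)\prod_{i=1}^m g_i(X_{s_i})\Bigr]=0,\qquad M^f_t:=f(X_t)-f(X_0)-\int_0^t\bar\LL f(X_u)\,du.
$$
A standard right-continuity argument extends this from the cocountable set of continuity times to all admissible $s_i,s,t$, so $M^f$ is a $\Pp^*$-martingale for every $f\in C_c^2(\R^d)$; hence $\Pp^*$ solves the martingale problem for $(\bar\LL,C_c^2(\R^d))$ starting at ${\bf0}$.

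\emph{Conclusion and main obstacle.} The hypothesized uniqueness of this martingale problem then forces $\Pp^*=\bar\Pp_{\bf0}$. As every subsequential weak limit coincides with $\bar\Pp_{\bf0}$, the full sequence $\Pp^n_{\bf0}$ converges weakly to $\bar\Pp_{\bf0}$. The one genuinely delicate step is the limit transition for the integral $\int_s^t\LL_{n_k}f(X^{n_k}_u)\,du$; it works here only because Lemma \ref{l5-3} provides \emph{uniform} (not merely pointwise) convergence of the discrete generators and because $\bar\LL f$ inherits enough regularity from $f\in C_c^2(\R^d)$ together with the continuity of $K$. Without such uniformity one would be forced to handle small- and large-jump contributions along the paths of $X^{n_k}$ separately, a considerably more involved task.
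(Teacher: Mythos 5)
Your proof is correct and follows essentially the same route as the paper: tightness from Lemma \ref{l5-1}(ii), uniform generator convergence from Lemma \ref{l5-3}, identification of any subsequential weak limit as a solution of the martingale problem for $(\bar\LL,C_c^2(\R^d))$, and then uniqueness to conclude convergence of the full sequence. The only (immaterial) difference is in the identification step: the paper invokes the Skorohod representation theorem to upgrade weak convergence to a.s.\ convergence on a common probability space and then applies bounded convergence directly, whereas you stay with weak convergence, restrict to the cocountable set of $\Pp^*$-a.s.\ continuity times, and use the Skorokhod-continuity of $\gamma\mapsto\int_s^t h(\gamma(u))\,du$ together with the uniform bound $\|\LL_{n_k}f-\bar\LL f\|_\infty\to 0$ and $\bar\LL f\in C_b(\R^d)$; both are standard and interchangeable here.
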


\begin{proof}
According to Lemma \ref{l5-1}, $\{\Pp^{n}_{\bf0}\}_{n\ge 1}$ is tight.
Then, there exists a weakly convergent subsequence
$\{\Pp^{n_k}_{\bf0}\}_{k\ge 1}$ (which we also denote by
$\{\Pp^{n}_{\bf0}\}_{n\ge 1}$ for simplicity) with a weak limit $\tilde
\Pp_{\bf0}$. By our assumption, the solution of martingale problem for $(\bar {\LL}, C_c^2 (\R^d))$ is
unique. Therefore, it suffices to prove the weak limit $\tilde \Pp_{\bf0}$
solves the martingale problem for $(\bar \LL, C^2_c (\R^d))$
with initial value ${\bf0}$.

 By the Skorohod theorem,
we can find a probability space $(\Omega, \mathscr{F},\hat \Pp)$ and a family of processes
 $(X_t^n)_{t\ge 0}$ and $(X_t)_{t\ge 0}$ on this space such that
\begin{itemize}
\item [(i)] The laws of $(X_t^n)_{t\ge 0}$ and $(X_t)_{t\ge 0}$ under
$\hat \Pp$ are $\Pp^{n}_{\bf0}$ and $\tilde \Pp_{\bf0}$, respectively;
\item [(ii)] For every $T>0$,
$\hat \Pp$-a.s.\
the process
  $X_{\cdot}^n$ converges to  $X_{\cdot}$ on $\D([0, T];\R^d)$.
\end{itemize}

Since $(X_t^n)_{t\ge0}$ is a solution to the martingale problem of
  $({\LL}_n, B_b(n^{-1}\Z^d))$,
 for every $0\le s_1\le \cdots\le s_k\le s\le t\le
T$, $f\in C_c^2(\R^d)$ and $G\in C_b((\R^d)^k)$ with $k\ge1$,
\begin{equation}\label{t5-1-1}
\hat \Ee\left[\left(f(X_t^n)-f(X_s^n)-\int_s^t {\LL}_n f(X_r^n)\,dr\right)G\left(X_{s_1}^n,\cdots X_{s_k}^n\right)\right]=0.
\end{equation}
Note that by \eqref{l5-2-1}, the random variable inside the above expectation is uniformly bounded in $n\geq 1$.
As
 $\hat \Pp$-a.s.\ the process $X_{\cdot}^n$ converges to  $X_{\cdot}$ on
$\D([0,T];\R^d)$,
 by the bounded convergence
theorem,
$$
\lim_{n \rightarrow \infty}\hat \Ee\Big[\Big|G\big(X_{s_1}^n,\cdots X_{s_k}^n\big)-
G\big(X_{s_1},\cdots X_{s_k}\big)\Big|\Big]=0.
$$
On the other hand,
\begin{align*}
 &\hat \Ee\left[\left|\int_s^t {\LL}_nf(X_r^n)\,dr-\int_s^t \bar {\LL} f(X_r)\,dr\right|\right] \\
&\le (t-s)\sup_{x\in n^{-1}\Z^d}|{\LL}_nf(x)-\bar {\LL}f(x)|
+\hat \Ee \left[\left|\int_s^t \bar {\LL}f(X_r^n)dr-\int_s^t \bar {\LL}
f(X_r)\,dr\right|\right].
\end{align*}

We next claim that $\bar {\LL}f\in C_b(\R^d)$ for any $f\in C_c^2(\R^d)$.
Indeed, it  follows from the boundedness of $K(x,z)$ that $\bar {\LL}f$ is bounded in $\R^d$ for any $f\in C_c^2(\R^d)$.
On the other hand, by the continuity and the boundedness of
$K(x,z)$, we know $\bar {\LL}_\eta f\in C_b(\R^d)$ for all
$f\in C_c^2(\R^d)$ and $\eta>0$, where $\bar {\LL}_\eta f$ is defined by \eqref{l5-3-2}.
Following the proof of Lemma \ref{l5-3}, we can obtain
$$
\sup_{x\in \R^d}|\bar {\LL} f(x)-\bar {\LL}_\eta f(x)|\le c_1\left(\eta^{2-\alpha}+\eta^{\alpha}\right).
$$
Hence $\bar \LL f$ is the uniform limit of $\bar \LL_\eta$ as $\eta \to 0$, and it follows that
$\bar \LL f \in C_b (\R^d)$ for any $f\in C_c^2(\R^d)$.

By the bounded convergence theorem,
$$
\lim_{n \rightarrow \infty}\hat \Ee
\left[\left|\int_s^t \bar {\LL}f(X_r^n)\,dr-\int_s^t \bar {\LL} f(X_r)\,dr\right|\right]=0,
$$
which together with
\eqref{l5-3-1} yields
$$
\lim_{n \rightarrow \infty}\hat \Ee\left[\left|\int_s^t
{\LL}_nf(X_r^n)\,dr-\int_s^t \bar {\LL} f(X_r)\,dr\right|\right]=0.
$$
Putting all the estimates above into \eqref{t5-1-1}, and letting
$n\rightarrow \infty$,  we
obtain
$$
\hat \Ee\left[\left(f(X_t)-f(X_s)-\int_s^t \bar {\LL} f(X_r)\,dr\right)G\left(X_{s_1},\cdots X_{s_k}\right)\right]=0.
$$
This shows that
$\tilde \Pp_{\bf0}$
is a solution to the martingale problem $(\bar \LL, C^2_c(\R^d))$
and clearly
$\tilde \Pp_{\bf0}(X_0={\bf0})=1$.
 This completes the  proof.
\end{proof}

\begin{remark}\label{keyrem}
As the above proofs show, we can replace the balanced condition
\eqref{balacd}
by
the following slightly weaker condition: there is some large $r_0\geq 1$ so that for all
$r\geq r_0$
and $x\in \Z^d$,
 \begin{equation}\label{e:zero}
\sum_{z\in \Z_{\bf0}^d :\, |z|\le r} z\kappa(x,z)/
|z|^{d+\alpha}=0.\end{equation}
 Note that under \eqref{e:zero} the generator of the process $X$ can be written as
$$\LL f(x)=\sum_{z\in \Z_{\bf0}^d: |z|\le r} (f(x+z)-f(x)-\nabla f(x)\cdot z)\frac{\kappa(x,z)}{|z|^{d+\alpha}}+ \sum_{z\in \Z_{\bf0}^d: |z|> r} (f(x+z)-f(x))\frac{\kappa(x,z)}{|z|^{d+\alpha}}
$$
 for all $f\in B_b(\Z^d)$ and $r\ge r_0$. This indicates that the process $X$ is almost
 driftless in large scale, and so the drift term does not contribute
  to the scaling process. On the other hand, when $\alpha\in (0,1)$ the tail of long range jumps for the process $X$ is up to the order $r^{-\alpha}$, which would dominate
 the drift term when we do the scaling. Based on this observation, we can expect that, when $\alpha\in (0,1)$, the invariance principle still holds without the balanced condition \eqref{balacd} (or \eqref{e:zero}). The details are
 given in the next subsection.
\end{remark}

\subsection{Non-balanced case for $\alpha\in (0,1)$}\label{subsecnonba}

In this subsection, we restrict ourselves to the case
$0<\alpha < 1$, and obtain similar results
as in the previous subsection
but without the balanced condition \eqref{balacd}.

\medskip

{ \paragraph{{\bf Assumption (B1${^*}$)}}\it
There exist constants $\theta\in (0,1)$, $C_1>0$ and $R_0\ge1$
such that  for
 every $R>R_0$ and $r\in [{R^{\theta} }, \,  R]$,
\begin{equation}\label{a5-2-1-1}
\sup_{x\in B({\bf0}, 2R)}\sum_{z\in \Z_{\bf0}^d : |z|\le r}
\frac{\kappa(x,z)}{|z|^{d+\alpha-1}}\le C_1 r^{1-\alpha},
\end{equation}
and
\begin{equation}\label{a5-2-2*}
\begin{split}
\sup_{x \in B({\bf0},2R)}\sum_{z\in \Z^d:
|z|>r}\frac{\kappa(x,z)}{|z|^{d+\alpha}}\le C_1r^{-\alpha}.
\end{split}
\end{equation}
}

Clearly, condition \eqref{a5-2-1-1} implies condition \eqref{a5-2-1},
while \eqref{a5-2-2*}
is the same as
\eqref{a5-2-2} in Assumption {\bf(B1)}.

\medskip

The following result corresponds  to Theorem \ref{t5-1} in the balanced conductance case
for $\alpha\in (0,2)$.

\begin{theorem}\label{t5-1*} Let $0<\alpha < 1$. Suppose that Assumptions {\bf(B1${^*}$)} and {\bf(B2)} hold, and the solution of the  martingale problem for $(\bar {\LL}, C^2_c(\R^d))$ defined by \eqref{l5-3-1a} is unique.
Then the conclusion of Theorem $\ref{t5-1}$ holds.
\end{theorem}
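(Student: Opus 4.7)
The plan is to retrace the three-step proof of Theorem \ref{t5-1}---tightness of $\{\Pp^n_{\bf 0}\}$, uniform convergence of generators $\LL_n f \to \bar{\LL} f$, and identification of any subsequential limit as the unique martingale solution---replacing every use of the balanced condition \eqref{balacd} by a first-order Taylor expansion. The replacement is affordable exactly once $\alpha < 1$, because the first-moment integral $\int_{|z| \le 1} |z|^{1-d-\alpha}\,dz$ is then finite, and the strengthened small-jump hypothesis \eqref{a5-2-1-1} supplies the matching discrete bound.

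First I would reproduce the exit-time estimate of Lemma \ref{l5-1}(i). Keeping the same test function $f_{x,r}$, I decompose $\LL f_{x,r}$ as in \eqref{e:key} but drop the symmetrization step. For $|z| \le r$ I use $|f_{x,r}(y+z) - f_{x,r}(y)| \le \|\nabla f_{x,r}\|_\infty\, |z| \le c_0 r^{-1}|z|$, so that
$$
|I_{1,r}| \le c_0 r^{-1} \sum_{1 \le |z| \le r} \frac{\kappa(y,z)}{|z|^{d+\alpha-1}} \le c_0 C_1 r^{-\alpha}
$$
by \eqref{a5-2-1-1}; the estimate of $I_{2,r}$ is unchanged since \eqref{a5-2-2*} coincides with \eqref{a5-2-2}. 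Inequality \eqref{l5-1-4} follows, and the tightness proof of Lemma \ref{l5-1}(ii) then applies verbatim.

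Second, I would adapt Lemmas \ref{l5-2} and \ref{l5-3}. In the small-jump part of \eqref{eq:grad2e}, the same first-order bound $|f(x+z)-f(x)| \le \|\nabla f\|_\infty |z|$ combined with \eqref{a5-2-1-1} gives a uniform $O(1)$ estimate for $|\LL_n f(x)|$, yielding the analogues of \eqref{l5-2-1} and \eqref{l5-2-1a}. For the convergence statement \eqref{l5-3-1}, I use the decomposition \eqref{eq:i1-i6}, except that the gradient correction $\nabla f(x)\cdot z\, \I_{\{|z|\le 1\}}$ no longer needs to be inserted (since for $\alpha<1$ the operator $\bar{\LL}$ of \eqref{l5-3-1a} is absolutely convergent without it): the small-jump tails $I_4^{n,R,\eta}$ for $\LL_n$ and $I_6^{n,R,\eta}$ for $\bar{\LL}$ are bounded by $c \eta^{1-\alpha}$ using \eqref{a5-2-1-1} and the boundedness of $K$ respectively, and these vanish as $\eta \to 0$. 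The middle-range term $I_1^{n,R,\eta}$ and the large-jump terms $I_2^{n,R}$, $I_3^{n,R}$, $I_5^{n,R,\eta}$ are handled exactly as in Lemma \ref{l5-3} via \eqref{a5-3-1} and \eqref{a5-2-2*}.

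Third, with tightness and uniform generator convergence in hand, the identification of any subsequential limit as the unique martingale solution of $(\bar{\LL}, C^2_c(\R^d))$ is literally the argument of Theorem \ref{t5-1}: Skorohod representation, passage to the limit in \eqref{t5-1-1} via the bounded convergence theorem (justified by the uniform bound on $\LL_n f$), and invocation of the assumed uniqueness. I do not expect a genuine obstacle; the only conceptual point---highlighted in Remark \ref{keyrem}---is that for $\alpha < 1$ the drift contribution generated by non-balanced small jumps is genuinely finite and is either absorbed into the limiting generator through Assumption \eqref{a5-3-1} or controlled by the tail estimate \eqref{a5-2-1-1} that replaces the second-order bound \eqref{a5-2-1}.
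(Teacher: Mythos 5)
Your proposal is correct and follows essentially the same route as the paper's proof: replace the second-order Taylor bound (which required the balanced condition to kill the linear term) by a first-order bound, invoke the strengthened small-jump hypothesis \eqref{a5-2-1-1} in Assumption {\bf(B1$^*$)} at the three places \eqref{balacd} was used (the exit-time estimate in Lemma~\ref{l5-1}, the uniform bounds \eqref{eq:grad2e}--\eqref{l5-2-2}, and the terms $I_4^{n,R,\eta}$, $I_6^{n,R,\eta}$ in \eqref{eq:i1-i6}), and observe that the resulting $\eta^{1-\alpha}$ tails vanish precisely because $\alpha<1$. The final identification via the martingale problem carries over verbatim, as you note.
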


\begin{proof}
In the proof of Theorem \ref{t5-1} and related lemmas,
there are three
places
where condition \eqref{balacd} is used.
Below, we explain how to
modify
the corresponding parts of the proof.
The rest part
goes through exactly as that for  Theorem \ref{t5-1}.

The first part is the estimates of \eqref{e:key} in the proof of Lemma \ref{l5-1}.
In the current non-balanced case,
\begin{align*}
{\LL}f_{x,r}(y)=&\sum_{z\in \Z_{\bf0}^d: \,  |z|\le
r}\big(f_{x,r}(y+z)-f_{x,r}(y)\big)
\frac{\kappa(y,z)}{|z|^{d+\alpha}}+\sum_{z\in \Z^d:|z|\ge r}\big(f_{x,r}(y+z)-f_{x,r}(y)\big)
\frac{\kappa(y,z)}{|z|^{d+\alpha}}\\
=&:I_{1,r}+I_{2,r}.
\end{align*}
By \eqref{a5-2-1-1}, for every for
every $R>R_0$, $r\in [{R^{\theta} }, \,  R]$,
$x\in B({\bf0},R)$ and $y\in B(x,r)$,
\begin{align*}
 |I_{1,r}|&\le \|\nabla f_{x,r}\|_\infty
 \sum_{z\in \Z^d_{\bf0}: |z|\le r}\frac{\kappa(y,z)}{|z|^{d+\alpha-1}}\le c_2r^{-1}\sup_{y\in B({\bf0},2R)}\sum_{z\in \Z^d_{\bf0}: \, |z|\le
r}\frac{\kappa(y,z)}{|z|^{d+\alpha-1}} \le c_3r^{-\alpha}.
\end{align*}
By the same way as that in the proof of Lemma \ref{l5-1}, we have
$|I_{2, r}|\leq c_4r^{-\alpha}$ for all $x\in B({\bf0}, R)$ and $y\in B(x, r)$.
Thus \eqref{e:2.14} holds and, consequently,
\eqref{l5-1-4} holds.

The second part is
\eqref{eq:grad2e} and \eqref{l5-2-2}
in the proof of Lemma \ref{l5-2}.
In the non-balanced case, estimate \eqref{eq:grad2e} can be done in the following way. For $n$ large enough,
in view of \eqref{e:2.4}, \eqref{a5-2-1-1} and \eqref{a5-2-2*},
\begin{equation}\label{e:2.27}
\begin{split}
  |{\LL}_n f(x)|
&\leq   n^{-d} \sum_{z\in n^{-1}\Z_{\bf0}^d : |z|\leq 1}
\| \nabla f\|_\infty \, |z| \, \frac{\kappa(nx, nz)}{|z|^{d+\alpha}}
 + n^{-d}\sum_{z\in n^{-1}\Z^d: |z|\geq 1}
2 \| f\|_\infty \, \frac{\kappa(nx, nz)}{|z|^{d+\alpha}} \\
&\le  n^{\alpha-1}\|\nabla f\|_\infty
\sum_{z\in \Z_{\bf 0}^d : |z|\le n}\frac{\kappa(nx,z)}{|z|^{d+\alpha-1}}
+ 2n^{\alpha} \|f\|_\infty  \sum_{z\in \Z^d: |z|>n}\frac{\kappa(nx,z)}{|z|^{d+\alpha}}\\
&\leq   C_1 \| \nabla f\|_\infty + 2 C_1 \|f\|_\infty.
\end{split}
\end{equation} Hence \eqref{eq:grad2e} holds.
On the other hand,
the fourth inequality of \eqref{l5-2-2}
should be
 replaced by
$$4\|f\|_\infty \, n^\alpha |nx|^{-1}  \sum_{z\in \Z_{\bf0}^d: \, |z|\le 2n|x|}
\frac{\kappa(
nx,z)}{|z|^{d+\alpha-1}}.$$
Then, by \eqref{a5-2-1-1},  the end estimate in \eqref{l5-2-2} holds.

The third place is the estimates for $I_4^{n,R,\eta}$ and $I_6^{n,R,\eta}$ in \eqref{eq:i1-i6}.
Without the balanced condition, we can not insert the item $\nabla f(x)\cdot z$ into the definitions of
$I_4^{n,R,\eta}$ and $I_6^{n,R,\eta}$.
Since $0<\alpha<1$, we do it directly. That is,
$$
I_4^{n,R,\eta}=n^{-d}
\sup_{x\in n^{-1}\Z^d: {|x|\le R}}\Big|\sum_{z\in
n^{-1}\Z^d_{\bf0}: {|z|\le \eta}}
\big(f(x+z)-f(x)\big)\frac{\kappa(nx,nz)}{|z|^{d+\alpha}}\Big|.
$$
Then, by the same argument as \eqref{e:2.27}, we get
$$
\lim_{\eta \downarrow 0}\limsup_{n \rightarrow \infty}I_4^{n,R,\eta}\le c_5\lim_{\eta \downarrow 0}\eta^{1-\alpha}=0.
$$
Similarly, we have  $$
\lim_{\eta \downarrow 0}\limsup_{n \rightarrow \infty}I_6^{n,R,\eta}\le c_6
\lim_{\eta \downarrow 0}(\eta^{1-\alpha}+\eta^{\alpha})=0.
$$
With the above modifications, the proof of Lemma \ref{l5-3} goes through. Hence the conclusion of
Theorem  \ref{t5-1} holds under the condition of this theorem.
\end{proof}

\subsection{Balanced case
for $\alpha=2$}\label{subsecba2}
In this subsection,
we consider the case
$\alpha=2$ under the balanced condition \eqref{balacd}. We will make the following two assumptions instead of Assumptions {\bf(B1)} and {\bf(B2)}, respectively.

\medskip

{\paragraph{{\bf Assumption (C1)}}\it
There exist constants $\theta\in (0,1)$, $C_1>0$ and $R_0\ge1$ such that
for every $R>R_0$ and $r\in [R^{\theta},  R]$,
\begin{equation}
\label{e:2.28}
\sup_{x\in B({\bf0}, 2R)}\sum_{z\in \Z^d: 1\leq |z|\le r}
\frac{\kappa(x,z)}{|z|^{d}}\le C_1\log (1+r),
\end{equation}
and
\begin{equation}\label{a5-2-2-2}
\begin{split}
\sup_{x \in B({\bf0},2R)}\sum_{z\in \Z^d:
|z|>r}\frac{\kappa(x,z)}{|z|^{d+2}}\le C_1r^{-2}.
\end{split}
\end{equation}
}

\medskip

{ \paragraph{{\bf Assumption (C2)}}  \it
 For any $n\ge1$, there exists a
function
$\Phi_n(x,z):
n^{-1}\Z^d \times
n^{-1}\Z_{\bf0}^d\rightarrow (0,\infty)
$
with $\Phi_n(x,z)=\Phi_n(x,-z)$ for all $x \in
n^{-1}\Z^d$ and $z\in
n^{-1}\Z_{\bf0}^d$,  so that $\sup_{n\ge 1}\|\Phi_n\|_\infty<\infty$, and
for any
integer $R\geq 1$
and any $f\in C_c^2(\R^d)$,
\begin{equation}\label{a5-3-1-2}
\lim_{n \rightarrow \infty}
\frac{n^2}{\log (1+n)}\sup_{x\in \Z^d: \atop{ |x|\le nR}}
\Bigg| \sum_{z\in \Z^d:\atop {1\le |z|\le n} }
\!\!\left(f\Big(\frac{x+z}{n}\Big)-f\left(\frac{x}{n}\right)\!\right)
\!\!\left(\frac{\kappa(x,z)-
\Phi_n({x}/{n},{z}/{n})}{|z|^{d+2}}\!\right) \Bigg| =0,
\end{equation}
and that for any
integer $R\ge 1$ and $1\le i,j\le d$,
\begin{equation}\label{a5-3-1-1}
\lim_{n\to\infty}\sup_{x\in n^{-1}\Z^d: |x|\le R}\Bigg|
\frac{1}{\log (1+n)}\sum_{z\in \Z_{\bf0}^d: |z|\le n} \frac{z_iz_j
\Phi_n(x,{z}/{n})}{|z|^{d+2}}-a_{ij}\Bigg|=0
\end{equation}
for some constant matrix $A:=(a_{ij})_{1\le i,j\le d}$ on $\R^d$.
}

\medskip

Corresponding to Theorem \ref{t5-1}, we have the following
result.

 \begin{theorem}\label{t5-1-1-2} Let $\alpha=2$. Assume that the balanced condition \eqref{balacd}, and Assumptions {\bf(C1)} and {\bf(C2)} hold.
Then the conclusion of Theorem $\ref{t5-1}$ holds with
\begin{equation}\label{l5-3-1a-2}\bar \LL f(x):=\frac{1}{2}\sum_{1\le i,j\le d} a_{ij}
\frac{\partial^2 f(x)}{\partial x_i \partial x_j},
\quad f\in C_c^2(\R^d).\end{equation}
\end{theorem}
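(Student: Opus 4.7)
The proof follows the three-step template of Theorem \ref{t5-1}: first tightness of $\{\Pp^n_{\bf0}\}_{n\ge 1}$ in $\D([0,T];\R^d)$, then uniform convergence $\LL_n f\to \bar\LL f$ for all $f\in C_c^2(\R^d)$, and finally identification of every subsequential weak limit via the martingale problem for $(\bar\LL,C_c^2(\R^d))$. Since $\bar\LL$ in \eqref{l5-3-1a-2} is a constant-coefficient second-order operator with nonnegative definite matrix $A$ (by the same computation as in Remark \ref{R:1.4}(viii), applied to $\Phi_n$), uniqueness of the martingale problem is classical, so only the first two steps are genuinely new.

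For tightness I adapt Lemma \ref{l5-1} using the same cutoff $f_{x,r}$. The balanced condition lets us insert a gradient correction in the small-jump part, so \eqref{e:2.28} gives
$$|I_{1,r}|\le \tfrac12\|\nabla^2 f_{x,r}\|_\infty\sum_{1\le|z|\le r}\frac{\kappa(y,z)}{|z|^{d}}\le Cr^{-2}\log(1+r),$$
while \eqref{a5-2-2-2} yields $|I_{2,r}|\le Cr^{-2}$. Hence $\Pp_x(\tau_{B(x,r)}\le t)\le Ctr^{-2}\log(1+r)$ for $r\in[R^\theta,R]$ and $x\in B({\bf0},R)$. Under the scaling $X^n_t=n^{-1}X_{n^2t/\log(1+n)}$,
$$\Pp^n_{\bf0}(\tau^n_{B({\bf0},R)}\le T)\le \frac{CT\log(1+nR)}{R^2\log(1+n)}\longrightarrow \frac{CT}{R^2}\quad\text{as }n\to\infty,$$
and the modulus-of-continuity estimate combined with \cite[Theorem 1]{Ad} gives tightness exactly as in Lemma \ref{l5-1}(ii).

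The generator convergence is the main step and replaces Lemma \ref{l5-3}. The uniform bound $\sup_n\sup_x|\LL_n f(x)|<\infty$ and the decay as $|x|\to\infty$ (analog of Lemma \ref{l5-2}) follow from \eqref{e:2.28}, \eqref{a5-2-2-2} and the balanced symmetry. For the uniform convergence on $|x|\le R$, split $\LL_n f(x)$ at $|w|=n$; the contribution from $|w|>n$ is $O(1/\log(1+n))$ by \eqref{a5-2-2-2}. For $|w|\le n$, apply \eqref{a5-3-1-2} to replace $\kappa(nx,w)$ by $\Phi_n(x,w/n)$ with uniformly vanishing error on $|x|\le R$. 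The symmetry $\Phi_n(x,w/n)=\Phi_n(x,-w/n)$ then lets us subtract $\nabla f(x)\cdot(w/n)$ from the summand at no cost, and a second-order Taylor expansion gives the main term
$$\frac{1}{2\log(1+n)}\sum_{i,j}\partial_{ij}f(x)\sum_{1\le|w|\le n}\frac{w_iw_j\,\Phi_n(x,w/n)}{|w|^{d+2}},$$
which by \eqref{a5-3-1-1} converges to $\tfrac12\sum_{i,j}a_{ij}\partial_{ij}f(x)=\bar\LL f(x)$ uniformly on $|x|\le R$. The Taylor remainder $R(x,w/n):=f(x+w/n)-f(x)-\nabla f(x)\cdot(w/n)-\tfrac12(w/n)^T\nabla^2 f(x)(w/n)$ is at most $\|\nabla^2 f\|_\infty|w/n|^2$ globally and at most $\varepsilon|w/n|^2$ for $|w|\le\delta n$ (by uniform continuity of $\nabla^2 f$); together with $\sup_n\|\Phi_n\|_\infty<\infty$ and $\sum_{1\le|w|\le m}|w|^{-d}\asymp\log m$, its total contribution is $O(\varepsilon+|\log\delta|/\log(1+n))$, which vanishes after letting $n\to\infty$, then $\delta\to 0$, then $\varepsilon\to 0$.

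With the above two ingredients, the martingale-problem identification (Skorohod coupling, bounded convergence in $\hat\Ee[(f(X^n_t)-f(X^n_s)-\int_s^t\LL_n f(X^n_r)\,dr)G(X^n_{s_1},\ldots,X^n_{s_k})]=0$) is transferred verbatim from the proof of Theorem \ref{t5-1}, and uniqueness of the limiting Brownian motion with covariance $A$ finishes the proof. The main obstacle is the generator convergence: unlike the $\alpha\in(0,2)$ case, the Hessian-moment sum of $\kappa$ truncated at $|w|=n$ is only logarithmically divergent and therefore very sensitive to the behavior of $\kappa$ at the "macroscopic" scales $|w|\asymp n$, so one cannot identify $\bar\LL$ from a quadratic moment of $\Ee[\kappa]$ alone. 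Hypothesis \eqref{a5-3-1-2} is crafted precisely so that the replacement of $\kappa$ by the symmetric and uniformly bounded $\Phi_n$ can be performed at the level of the full increment $f(x+w/n)-f(x)$ \emph{before} any Taylor expansion, after which the Ces\`aro-type averaging \eqref{a5-3-1-1} delivers the covariance matrix $A$.
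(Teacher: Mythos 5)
Your proposal is correct and follows essentially the same three-step structure as the paper's proof: exit-time estimates from Assumption (C1) for tightness, generator convergence $\LL_n f\to\bar\LL f$ using (a5-3-1-2) to swap $\kappa$ for the symmetric $\Phi_n$ followed by a second-order Taylor expansion and (a5-3-1-1), and then the Skorokhod/martingale-problem identification copied from Theorem \ref{t5-1}. The one place you deviate is in handling the Taylor remainder: the paper proves generator convergence only for $f\in C_c^3(\R^d)$, bounding the remainder by $\|\nabla^3 f\|_\infty|z|^3/n^3$ and summing $\sum_{1\le|z|\le n}|z|^{1-d}\asymp n$ to get $O(1/\log(1+n))$, whereas you stay with $f\in C^2_c(\R^d)$ and split the sum at $|w|=\delta n$, using uniform continuity of $\nabla^2 f$ on the small range and the global bound $\|\nabla^2 f\|_\infty$ on the large range, giving a remainder contribution $O(\varepsilon+|\log\delta|/\log(1+n))$. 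Your variant is a mild technical improvement, since it establishes generator convergence directly on the test-function class $C^2_c$ actually appearing in the statement of the martingale problem, avoiding the (harmless but implicit) appeal to the fact that the constant-coefficient elliptic martingale problem is equally well-posed with $C^3_c$ test functions.
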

\begin{proof}

 The proof follows from that of Theorem \ref{t5-1} with some necessary modifications. For the convenience of
the reader, we
highlight all main differences here.

(1) First, under Assumption  {\bf(C1)},
there is a constant $c>0$ that depends only on
the constant $C_1$ in Assumption  {\bf(C1)}
so that for every  $R>R_0$,
$r\in [R^{\theta},  R]$, $x\in B({\bf0},R)$ and $t>0$,
$$
\Pp_x(\tau_{B(x,r)}\le t)\le c\, tr^{-2}\log(1+r).
$$
Consequently, $\{\Pp_{\bf0}^{n}\}_{n=1}^\infty$ is tight in $\D ([0,T];\R^d)$ for any $T>0$.

Under Assumption {\bf (C1)}, we
have that for every $f\in C_c^2(\R^d)$,
\begin{equation}\label{l5-2-1-2}
\sup_{n\geq 1} \sup_{x\in n^{-1}\Z^d}|{\LL}_nf(x)|<\infty,
\quad
\lim_{R \rightarrow \infty}\limsup_{n \rightarrow \infty}\sup_{x\in
n^{-1}\Z^d: |x|\ge R}|{\LL}_n f(x)|=0.
\end{equation}
The proofs of the above conclusions are similar to these of Lemmas \ref{l5-1} and \ref{l5-2},
so they are omitted.

\smallskip

(2) Next, we claim that under Assumptions {\bf (C1)} and {\bf(C2)}, for
any $f\in C_c^3(\R^d)$,
\begin{equation}\label{l5-3-1-2}
\lim_{n \rightarrow \infty}\sup_{x\in n^{-1}\Z^d}|{\LL}_nf(x)-\bar
{\LL}f(x)|=0,
\end{equation}
where the operator $\bar {\LL}$ is defined by \eqref{l5-3-1a-2}. Indeed, for any $n\ge1$ and $x\in n^{-1}\Z^d$, define
\begin{align*}
{\LL}_{n,1}f(x):&=\frac{1}{n^d\log(1+n)}\sum_{z\in n^{-1}\Z^d: { 0<|z|\le 1 }}
\big(f(x+z)-f(x)\big)\frac{\kappa(nx,nz)}{|z|^{d+2}} \\
&=
\frac{n^2}{\log(1+n)}\sum_{z\in \Z^d: 1\le |z|\le n}
\Big(f\Big(x+\frac{z}{n}\Big)-f(x)\Big)\frac{\kappa(nx,z)}{|z|^{d+2}},\\
\bar {\LL}_{n,1}f(x):&=\frac{1}{n^d\log(1+n)}\sum_{z\in n^{-1}\Z^d:
 { 0<|z|\le 1}}
\big(f(x+z)-f(x)\big)\frac{ \Phi_n(x,z)}{|z|^{d+2}} \\
&=
\frac{n^2}{\log(1+n)}\sum_{z\in \Z^d:
1\le |z|\le n}
\Big(f\Big(x+\frac{z}{n}\Big)-f(x)\Big)\frac{ \Phi_n(x, {z}/{n})}{|z|^{d+2}}.
\end{align*}

For  every $R>1$,
\begin{align*}
\sup_{x\in n^{-1}\Z^d}|{\LL}_nf(x)-\bar {\LL}f(x)|&\le \sup_{x\in n^{-1}\Z^d: {|x|\le R}}|{\LL}_{n,1}f(x)-\bar {\LL}
f(x)|+
\sup_{x\in n^{-1}\Z^d: {|x|\le R}}|{\LL}_nf(x)-{\LL}_{n,1}f(x)|\\
&\quad+
\sup_{x\in n^{-1}\Z^d: {|x|>R}}|{\LL}_n f(x)|+ \sup_{x\in \R^d:
{|x|>R}}|\bar {\LL} f(x)|\\
&=:I_1^{n,R}+I_2^{n,R}+I_3^{n,R}+I_4^{R}.
\end{align*}

According to
\eqref{a5-3-1-2}, for every $R>1$,
$$
\lim_{n \rightarrow \infty}\sup_{x\in n^{-1}\Z^d: |x|\le
R}|{\LL}_{n,1}f(x)-\bar {\LL}_{n,1}f(x)|=0.
$$
On the other hand, for any $f\in C_c^3(\R^d)$ and $x\in n^{-1}\Z^d$ with $|x|\le R$
\begin{align*}|\bar
{\LL}_{n,1}f(x)-\bar {\LL}f(x)|
&=\left|\frac{n^2}{\log(1+n)}\sum_{z\in \Z^d:
1\le|z|\le n}
\Big(f\Big(x+\frac{z}{n}\Big)-f(x)-\nabla f(x)\cdot \frac{z}{n} \Big)\frac{ \Phi_n(x,{z}/{n})}{|z|^{d+2}}-\bar {\LL}f(x)\right|\\
&\le \left|\frac{1}{2\log(1+n)}\sum_{z\in \Z^d:
1\le |z|\le n} \langle
 \nabla^2 f(x), z\otimes z\rangle \frac{  \Phi_n(x,{z}/{n})}{|z|^{d+2}}-\bar {\LL}f(x)\right|\\
 &\quad + \frac{c_1\|\nabla^3 f\|_\infty}{n\log(1+n)}\sum_{z\in \Z^d:
1\le |z|\le n}\frac{|z|^3}{|z|^{d+2}}.
\end{align*} Here in the equality above we
can add
the gradient term
$\nabla f(x)\cdot \frac{z}{n}$ in the summation,
thanks to the property that
$\Phi_n(x,z)=\Phi_n(x,-z)$ for all $x\in n^{-1}\Z^d$, $z\in n^{-1}\Z_{\bf0}^d$ and $n\ge 1$;
and the inequality above follows from the Taylor formula and
the fact that $\sup_{n\ge1}\|\Phi_n\|_\infty<\infty$.
Hence by \eqref{a5-3-1-1} and the definition of the operator
$\bar \LL$
given by \eqref{l5-3-1a-2}, we can see that
$$
\lim_{n \rightarrow \infty}\sup_{x\in n^{-1}\Z^d: |x|\le R}|\bar {\LL}_{n,1}f(x)
-\bar \LL f(x)|=0.
$$
Therefore for any $R>1$,
$$
\lim_{n\rightarrow \infty}I_1^{n,R}=0.
$$

By \eqref{a5-2-2-2}, for any $R>1$,
\begin{align*}
\limsup_{n \rightarrow \infty}I_2^{n,R}
&\le 2\|f\|_\infty \limsup_{n \rightarrow \infty}\sup_{x\in \Z^d:
{|x|\le nR}} \frac{n^2}{\log(1+n)}\sum_{|z|\ge n}\frac{\kappa(x,z)}{|z|^{d+2}}=0.
\end{align*}
According to \eqref{l5-2-1-2},
$$\lim_{R\to\infty}\limsup_{n\to\infty} I_3^{n,R}=0.
$$
By the definition of the operator $\bar\LL$ given by \eqref{l5-3-1a-2} again, it is obvious that
$$\lim_{R\rightarrow \infty}I_4^{R}=0.
$$
Therefore,  \eqref{l5-3-1-2} is a consequence of all estimates above,
by first letting $n \rightarrow \infty$ and then taking $R \rightarrow \infty$.

(3)
According to \eqref{a5-3-1-1}, $A:=(a_{ij})_{1\le i,j\le d}$
given in Assumption {\bf (C2)} is non-negative definite.
Note that the solution of the  martingale problem for $(\bar {\LL}, C^2_c(\R^d))$
defined by \eqref{l5-3-1a-2}
 is always unique, since it corresponds to
 Brownian motion with a deterministic
covariance matrix $A$.
With (1) and (2) at hand, we can follow the argument of
the proof of Theorem \ref{t5-1} to obtain
the desired assertion.
\end{proof}

\section{Random walks in balanced random environments}
\label{section3} \rm

\begin{proof}[Proof of Theorem {\rm \ref{t5-2}(i)}]
First, we claim that
Assumptions {\bf (A0)} and {\bf(A1)} imply
that for a.s.\ $\w\in \Omega$, Assumption {\bf (B1)} holds, and that for
every
integer
$R\geq 1$, rational constant $\e>0$
 and $f\in C_c^2(\R^d)$,
\begin{equation}\label{t1-1-1}
\lim_{n \rightarrow \infty}
n^\alpha\sup_{x\in \Z^d: \atop{|x|\le n R}}
\Big| \!\!\!\sum_{z\in \Z^d \atop{ n\e<|z|<n/\e}}\!\!\!
\left(f\Big(\frac{x+z}{n}\Big)-f\left(\frac{x}{n}\right)\right)
\left(\frac{\kappa(x,z)-
\Ee [\kappa(x, z)]}{|z|^{d+\alpha}}\right) \Big|=0.
\end{equation}

The proof is mainly based on that of \cite[Proposition 5.6]{CKW}.
For the convenience of the reader, we
give
the details here.
Set $J(x,z):=\Ee[\kappa(x,z)]$. For $x\in \Z^d$, $R,\delta,\varepsilon>0$ and $h:\Z^d\times \Z^d \rightarrow \R$, define
\begin{align*}
q_1(x,\delta,h,\varepsilon)&:=\Pp\Big(\Big|\sum_{{z \in \Z^d:}
\atop{n\varepsilon \le |z|\le n/\varepsilon}}h(x,z)\frac{(\kappa(x,z)-J(x,z))}{|z|^{d+\alpha}}\Big|>\delta (n\varepsilon)^{-\alpha}\Big),\\
q_2(x,R,\delta)&:=\Pp\Big(\Big|\sum_{z \in \Z_{\bf0}^d:|z|\le R}\big(\kappa(x,z)-J(x,z)\big)\Big|>\delta R^{d}\Big),\\
q_3(x,R,\delta)&:=\Pp\Big(\Big|
\sum_{z\in \Z_{\bf0}^d: |z|\le R}\frac{(\kappa(x,z)-J(x,z))}{|z|^{d+\alpha-2}}\Big|>\delta R^{2-\alpha}\Big).
\end{align*}

Note that for a series of independent random variables $\{\eta_i\}_{1\le i\le n}$ with
$\Ee[\eta_i]=0$ for all $1\le i \le n$ and
$M:=\sup_{1\le i \le n}\Ee[|\eta_i|^{q}]<\infty$
for some $q\ge 1$,
by the Burkholder-Davis-Gundy inequality,
\begin{equation}\label{t1-1-2}
\Ee\left[\left|\sum_{i=1}^n \eta_i\right|^{q}\right]\le c_0
\Ee\left[\left(\sum_{i=1}^n \eta_i^2 \right)^{q/2}\right]\le c_1\max\{ n^{q/2-1},1 \} \sum_{i=1}^n \Ee\left[|\eta_i|^q\right]
\leq c_2 n^{\max \{ q/2,1 \}} M,
\end{equation}
where $c_0,c_1,c_2$ are positive constants that depend only on $q$.

For every $m \in \R_+$, let
\begin{align*}
S_m(i):=\Ee\left[\left(\sum_{z\in
\Z^d_{\bf0} : |z|\le 2^i}
\frac{\big(\kappa(x,z)-J(x,z)\big)}{|z|^{d+\alpha-2}}\right)^{m}\right]
=2^m
\Ee\left[\left(\sum_{j=1}^i \xi(j)\right)^{m}\right],
\end{align*}
where
$$
\xi(j)=\sum_{z \in \Z^d_{+, *}:
2^{j-1}<|z|\le 2^j}
\frac{ (\kappa(x,z)-J(x,z) )}{|z|^{d+\alpha-2}}.
$$
Recall that $J(x, z)=\Ee[\kappa (x, z)]$ and $\{\kappa(x,z): x\in \Z^d, z\in \Z^d_{+,*}\}$  are independent.
For every $m\in [2, p]$, by \eqref{t1-1-2}, there is a constant $c_3>0$ depending only on $m$ and $d$ so that
\begin{align*}
  \Ee\left[\left|  \sum_{z \in \Z^d_{+, *}: |z|=k}
\frac{\kappa(x,z)-J(x,z)}{|z|^{d+\alpha-2}}\right|^{m}\right]
&\leq   c_3k^{-m (d+\alpha-2)}  k^{\frac{(d-1)m}{2}}  \sup_{x\in \Z^d, z\in \Z^d_{\bf0}} \Ee [ \kappa (x, z)^{m}]\\
 &=  c_3 k^{\frac{m (3-2\alpha -d)}{2}}\sup_{x\in \Z^d,z\in \Z^d_{\bf0}} \Ee [ \kappa (x, z)^{m}].
\end{align*}
Hence by \eqref{t1-1-2} again,
\begin{equation}\label{e:note1} \begin{split}
\Ee[|\xi(j)|^{m}]
&=
\Ee\left[\left|\sum_{k=2^{j-1}+1}^{2^j}
\sum_{z \in \Z^d_{+, *}: |z|=k}\frac{\kappa(x,z)-J(x,z)}{|z|^{d+\alpha-2}}\right|^{m}\right]
  \\
&\le  c_4 2^{\frac{jm}{2}} 2^{\frac{jm (3-2\alpha -d)}{2}}\sup_{x\in \Z^d, z\in \Z_{\bf0}^d} \Ee [ \kappa (x, z)^{m}]
=  c_4 2^{\frac{jm(4-d-2\alpha)}{2}}\sup_{x\in \Z^d, z\in \Z_{\bf0}^d}\Ee [ \kappa (x, z)^{m}],
\end{split}\end{equation}
where $c_4>0$ is a constant that depends only on $m$ and $d$.
Consequently, according to the first inequality in \eqref{t1-1-2} and the H\"older inequality as well as Assumption {\bf(A1)}(i), we know that for each
$\e\in (0,d-4+2\alpha)$ (thanks to the assumption that $d>4-2\alpha$)
\begin{align*}
\sup_{i\ge 0}|S_p(i)|&\le c_5\sup_{i\ge 0}\Ee\left[\left|\sum_{j=1}^i \xi(j)^2\right|^{p/2}\right]\le c_5\sup_{i\ge 0}\Ee\left[\left(\sum_{j=1}^i 2^{\frac{\e jp}{2}}|\xi(j)|^p\right)
\left(\sum_{j=1}^i 2^{-\frac{\e jp}{p-2}}\right)^{p/2-1}\right]\\
&\le c_6\sup_{i\ge 0}\sum_{j=1}^i 2^{\frac{\e jp}{2}}\Ee[|\xi(j)|^p]\le
c_7\sum_{j=1}^\infty 2^{\frac{jp(4+\e-d-2\alpha)}{2}}<\infty.
\end{align*}
Then,  using the Markov inequality and the fact that $p >\frac{d+1}{2-\alpha}$,
we can find
a constant $\theta\in (0,1)$ such that
\begin{equation}\label{t1-1-3}
\sum_{R=1}^\infty \sum_{x\in B({\bf0},2R)\cap \Z^d}
q_3(x,R^\theta,\delta)\le c_{8}(\delta)\sum_{R=1}^\infty R^{d-(2-\alpha)\theta p}<\infty.
\end{equation}

Similarly,
since $p>\frac{2(d+1)}{d}$,
we can also show that
\begin{equation}\label{t1-1-4}
\sum_{R=1}^{\infty}\sum_{x \in
B({\bf0},2R)\cap \Z^d}\sum_{r=R^{\theta}/2}^{\infty}q_2(x,r,\delta)\le c_9(\delta)\sum_{R=1}^\infty R^d\sum_{r=R^\theta}^\infty
r^{-{dp}/{2}}<\infty.
\end{equation}
Thus, according to \eqref{t1-1-3}, \eqref{t1-1-4} and the Borel-Cantelli lemma,  for a.s. $\omega \in \Omega$ there exists $R_0(\w)\ge 2$ such that for all
$R>R_0(\w)$, $x\in B({\bf0},2R)\cap \Z^d$ and $r\ge R^{\theta}/2$,
$$
\sum_{z\in \Z^d_{\bf0}: |z|\le R^\theta}
\frac{\kappa(x,z)}{|z|^{d+\alpha-2}}
\le c_{10}R^{\theta(2-\alpha)} \quad \hbox{and} \quad
\sum_{z\in \Z^d_{\bf0} :|z|\le r } \kappa(x,z)
\le c_{11} r^{d}.
$$
Therefore, for $R\geq R_0(\omega)$,  every $r\in [R^\theta,  R]$ and $x\in B(0,2R)$,
\begin{align*}
\sum_{1\le |z|\le r}\frac{\kappa(x,z)}{|z|^{d+\alpha-2}}
&\le  \sum_{1\le |z|\le R^\theta}\frac{\kappa(x,z)}{|z|^{d+\alpha-2}}+
\sum_{R^\theta\le |z|\le r}\frac{\kappa(x,z)}{|z|^{d+\alpha-2}}\le   c_{10} R^{\theta(2-\alpha)}+\sum_{i=[\frac{\theta \log R}{\log 2}]}^{[\frac{\log r}{\log 2}]}
\sum_{2^i\le |z|< 2^{i+1}}\frac{\kappa(x,z)}{|z|^{d+\alpha-2}} \\
&\le   c_{10} R^{\theta(2-\alpha)}+c_{11} \sum_{i=[\frac{\theta \log R}{\log 2}]}^{[\frac{\log r}{\log 2}]}2^{-i(d+\alpha-2)}2^{(i+1)d} \le c_{12} r^{2-\alpha},
\end{align*}
and
\begin{equation}\label{t1-1-7} \begin{split}
\sum_{|z|> r}\frac{\kappa(x,z)}{|z|^{d+\alpha}}
&\leq  \sum_{i=[\frac{\log r}{\log 2}]}^\infty \sum_{ 2^i\leq |z|< 2^{i+1}}
\frac{\kappa(x,z)}{|z|^{d+\alpha}}\leq  \sum_{i=[\frac{\log r}{\log 2}]}^\infty 2^{-i (d+\alpha )} \sum_{z\in \Z^d_{\bf0}: |z|<2^{i+1}}
\kappa (x, z) \\
&\leq   c_{11} \sum_{i=[\frac{\log r}{\log 2}]}^\infty 2^{-i (d+\alpha )}  2^{(i+1)d}  \leq   c_{12} r^{-\alpha}.
\end{split}\end{equation}
This shows that
Assumption {\bf (B1)} holds for a.s. $\omega\in \Omega$.

For any fixed $f\in C_c^2(\R^d)$ and $n\ge1$, let
$$
f_n(x,z):=f \left(\frac{x+z}{n} \right)-f\left(\frac{x}{n}\right).
$$
 Then, for any $R\ge1$ and $\varepsilon,\delta>0$,
\begin{align*}
\sum_{n=1}^\infty \sum_{x\in B({\bf0},nR)\cap \Z^d}q_1(x,\delta,f_n ,\varepsilon)
&\le \sum_{n=1}^\infty \sum_{x\in B({\bf0},nR)\cap \Z^d} \delta^{-p}(\varepsilon n)^{\alpha p} (n\varepsilon)^{-p(d+\alpha)}\\
&\quad\times
 \Ee\left[\left|\sum_{z\in \Z^d: n\varepsilon\le |z|\le n/\varepsilon}f_n(x,z) (n\varepsilon)^{d+\alpha}\frac{(\kappa(x,z)-J(x,z))}{|z|^{d+\alpha}}\right|^{p}\right]\\
&\le c_{13} \delta^{-p} (\eps n)^{-pd} (nR)^d (n/\eps)^{{dp}/{2}} \| f_n\|_\infty^{p}
\sup_{x\in \Z^d, z\in \Z^d_{\bf0}} \Ee \left[ \kappa (x, z)^{p}\right] \\
&\le c_{14} \delta^{-p} \eps^{-\frac{3pd}{2}} R^d \| f\|_\infty^{p} \,
 \sum_{n=1}^\infty n^{-(\frac{p}{2}-1)d}<\infty.
\end{align*}
Here the first inequality is due to the Markov inequality, the second inequality follows
from \eqref{balacd}, \eqref{t1-1-2} and Assumption {\bf (A1)}(i),
 and in the last inequality
we have used the fact that
$p>\frac{2(d+1)}{d}$.
By the Borel-Cantelli lemma, there is a subset $\Omega_f$
of $\Omega$ of full probability  (which may depend on $f$)
so that for every $\omega\in \Omega_f$ and every positive rational constants
 $\eps,  \delta$ and
 integer $R\ge1$,
 there exists $N_0:=N_0(\omega,R,\delta,\varepsilon,\|f\|_\infty)\ge1$
such that for every $n>N_0$ and $\w\in \Omega_f$,
$$
n^\alpha \sup_{x\in B({\bf0},nR)\cap \Z^d} \Big|\sum_{{z \in \Z^d:}
\atop{n\varepsilon \le |z|\le n/\varepsilon}}f_n(x,z)\frac{(\kappa(x,z)-J(x,z))}{|z|^{d+\alpha}}\Big|\le \delta \varepsilon^{-\alpha}.
$$
Taking $\delta \to 0$ in the inequality above,
we   can obtain that for every given $f\in C_c^2(\R^d)$, \eqref{t1-1-1} holds for each
$\w\in \Omega_f$.

Now we are going to show that \eqref{t1-1-1} holds
on some subset $\Omega_0$ of $\Omega$ having full probability that is independent of $f$.

Let $\Upsilon\subset C_c^2(\R^d)$ be a countable dense subset
in $(C_c(\R^d),\|\cdot\|_\infty)$, and set $\Omega_1:
=\bigcap_{f\in \Upsilon}\Omega_f$.
Then $\Omega_1$ is of full probability.

Given some $R>0$ and $\e>0$, define (for simplicity, we omit the parameter $\w$ in $T_n(f)$)
\begin{align*}
T_n(f) =n^\alpha\sup_{x\in \Z^d:|x|\le nR}Q_n(f,x),\quad f\in C_c^2(\R^d),
\end{align*}
where
\begin{align*}
Q_n(f,x) =\Big|\sum_{z\in \Z^d: n\e\le |z|\le n\e^{-1}}
\Big(f\Big(\frac{x+z}{n}\Big)-f\Big(\frac{x}{n}\Big)\Big)\frac{\kappa(x,z)-J(x,z)}{|z|^{d+\alpha}}\Big|.
\end{align*}
Therefore, for every $f,g\in C_c^2(\R^d)$,
\begin{align*}
 |T_n(f)-T_n(g)|
&\le n^\alpha \sup_{x\in \Z^d:|x|\le nR}|Q_n(f,x)-Q_n(g,x)|\\
&\le n^\alpha\sup_{x\in \Z^d:|x|\le nR}\Bigg|\sum_{z\in \Z^d:n\e\le |z|\le n\e^{-1}}
\left[(f-g)\Big(\frac{x+z}{n}\Big)-(f-g)\Big(\frac{x}{n}\Big)
\right] \frac{\kappa(x,z)-J(x,z)}{|z|^{d+\alpha}}\Bigg|\\
&\le 2n^\alpha\|f-g\|_\infty\bigg(\sup_{x\in \Z^d:|x|\le nR}\sum_{|z|\ge n\e}\frac{\kappa(x,z)}{|z|^{d+\alpha}}+
\sup_{x\in \Z^d:|x|\le nR}\sum_{|z|\ge n\e}\frac{J(x,z)}{|z|^{d+\alpha}}\bigg).
\end{align*}
By \eqref{t1-1-7},
there exists a subset $\Omega_2\subset \Omega$ having
full probability such that for all
$\w\in \Omega_1$,
 integer  $R\geq 1$,
 rational constant $\e>0$
 and $n>N_0(\w)$ large enough,
\begin{align*}
\sup_{x\in \Z^d:|x|\le nR}\sum_{|z|>n\e}\frac{\kappa(x,z)}{|z|^{d+\alpha}}\le c_{15} (n\e)^{-\alpha}.
\end{align*}
Meanwhile, since $J(x,z)$ is uniformly bounded,
\begin{align*}
\sup_{x\in \Z^d:|x|\le nR}\sum_{|z|>n\e}\frac{J(x,z)}{|z|^{d+\alpha}}\le c_{16}(n\e)^{-\alpha}.
\end{align*}
Combining all the estimates above together yields that for all $f,g\in C_c^\infty(\R^d)$ and $\w\in \Omega_1$,
$$
\limsup_{n \rightarrow \infty}|T_n(f)-T_n(g)|\le c_{17}(\e)\|f-g\|_\infty,
$$
where $c_{17}$ is independent of $f,g$ and $n$.

For any $f\in C_c^\infty(\R^d)$, we can find a sequence $\{f_k\}_{k=1}^\infty\subset \Upsilon$ such that
$\lim_{k \rightarrow \infty}\|f_k-f\|_\infty=0$.
Let $\Omega_0:=\Omega_1\cap \Omega_2$.
Obviously
$\Omega_0$ has
full probability. For every $\w\in
\Omega_0$,
\begin{align*}
\lim_{n \rightarrow \infty}T_n(f)&\le \lim_{n \rightarrow \infty}T_n(f_k)+
\limsup_{n \rightarrow \infty}|T_n(f)-T_n(f_k)|\le
c_{17} (\omega) \|f-f_k\|_\infty,
\end{align*}
where in the second inequality we used the fact that \eqref{t1-1-1} holds for every
$\w \in \Omega_0$ and every $g\in \Upsilon$.
Then, letting $k \rightarrow \infty$ in the inequality above, we know that \eqref{t1-1-1} is true
for every $\w\in \Omega$ and $f\in C_c^2(\R^d)$.
Therefore, by  \eqref{t1-1-1} and Assumption {\bf (A1)}(ii) (in particular, \eqref{e:1.3}), we
conclude that
$$\lim_{n \rightarrow \infty}
n^\alpha\sup_{x\in \Z^d: \atop{ |x|\le nR}}
\Bigg| \sum_{z\in \Z^d:\atop {n\e<|z|<n/\e} }
\left(f\Big(\frac{x+z}{n}\Big)-f\left(\frac{x}{n}\right)\right)
\left(\frac{\kappa(x,z)-K(\frac{x}{n},\frac{z}{n})}{|z|^{d+\alpha}}\right) \Bigg| =0$$
holds for  every  $\w\in \Omega_0$,   rational constant  $\e>0$ small enough,
 integer $R\geq 1$
 large enough
 and any $f\in C_c^2(\R^d)$.   In particular, this implies that
 \eqref{a5-3-1} holds for  every  $\w\in \Omega_0$,
  integer $R\geq 1$
 large enough   and any $f\in C_c^2(\R^d)$.   Hence, Assumption {\bf (B2)} holds for
every $\w \in \Omega_0$. The conclusion of Theorem \ref{t5-2}(i) now
 follows from Theorem \ref{t5-1}.
\end{proof}

\begin{proof}[Proof of Theorem {\rm \ref{t5-2}(ii)}] Suppose that Assumptions {\bf(A0)} and {\bf(A2)} hold.
We will show that
Assumptions {\bf(C1)} and {\bf(C2)} are satisfied for a.s. $\omega\in \Omega$.
 For any $x\in \Z^d$ and $z\in \Z_{\bf0}^d$, set $J(x,z):=\Ee[\kappa(x,z)]$. We set for $x\in \Z^d$, $R,\delta>0$ and $h:\Z^d\times \Z^d \rightarrow \R$,
\begin{align*}
q_4(x,\delta,h)&:=\Pp\Big(\Big|\sum_{{z \in \Z^d:}
{1\le |z|\le n}}h(x,z)\frac{(\kappa(x,z)-J(x,z))}{|z|^{d+2}}\Big|>\delta\frac{\log(n+1)}{n^2} \Big),\\
q_5(x,R,\delta)&:=\Pp\Big(\Big|
\sum_{z\in \Z_{\bf0}^d: |z|\le R}\frac{(\kappa(x,z)-J(x,z))}{|z|^{d}}\Big|>\delta \log(R+1)\Big).
\end{align*}

For any fixed $f\in C_c^2(\R^d)$ and $n\ge1$, let $f_n(x,z):=f (\frac{x+z}{n} )-f\left(\frac{x}{n}\right)$. Then for any $R,n\ge1$, $x\in B({\bf0},nR)\cap  \Z^d$ and $\delta>0$,
\begin{equation} \label{e:3.70}\begin{split}
& q_4(x,\delta,f_n )  \\
&\leq   \Pp\Big(\Big|\sum_{{z \in \Z^d_{+,*}:}
{1\le |z|\le n}}f_n(x,z)\frac{(\kappa(x,z)-J(x,z))}{|z|^{d+2}}\Big|>\frac{\delta}{2}\frac{\log(1+n)}{n^2} \Big)   \\
&\quad   +\Pp\Big(\Big|\sum_{{z \in \Z^d_{+,*}:}
{1\le |z|\le n}}f_n(x,-z)\frac{(\kappa(x,z)-J(x,z))}{|z|^{d+2}}\Big|>\frac{\delta}{2}\frac{\log(1+n)}{n^2} \Big)   \\
&\leq    \Pp\Big(\Big |\!\sum_{{z \in \Z^d_{+,*}:}
{1\le |z|\le n }}\!\!\left\langle \nabla^2 f(x_*(z,n)),\frac{z}{|z|}\otimes \frac{z}{|z|}\right\rangle\frac{(\kappa(x,z)-J(x,z))}{|z|^{d}}\Big|\!>\delta{\log(n+1)}
\Big) \\
&\quad  + \Pp\Big(\Big |\!\sum_{{z \in \Z^d_{+,*}:}
{1\le |z|\le n }}\!\!\left\langle \nabla^2 f(x_*(-z,n)),\frac{z}{|z|}\otimes \frac{z}{|z|}\right\rangle\frac{(\kappa(x,z)-J(x,z))}{|z|^{d}}\Big|\!>\delta{\log(n+1)}
\Big).
\end{split}\end{equation}
Here in the first inequality we used the balanced condition \eqref{balacd},  in the second inequality we applied the Taylor formula and $x_*(z,n)=x/n+ \theta_0(x,z,n)z/n$ with $\theta_0(x,z,n)\in [0,1]$ being a constant
depending only
on $x,z$ and $n$.
By \eqref{l5-3-1b} in Assumption {\bf(A2)}(i), for
$c_0:=c_*/(2\|\nabla^2 f\|_\infty)^\eta$,
$$
C_*:=
\sup_{x\in \Z^d, z\in \Z^d_{\bf0},n\ge1}\Ee e^{c_0| \langle \nabla^2f(x_*(z,n)),\frac{z}{|z|}\otimes \frac{z}{|z|}\rangle (\kappa(x,z)-J(x,z))|^\eta} <\infty.
$$
This along with the Markov inequality implies that there exist constants $c_1,c_2>0$ such that for all $x\in \Z^d$, $z\in \Z^d_{\bf 0}$, $n\ge1$ and $t>0$,
\begin{equation} \label{e:3.8}\begin{split}
&  \Ee\left[ \exp\left(t\left| \left \langle \nabla^2 f(x_*(z,n)),\frac{z}{|z|}\otimes \frac{z}{|z|}\right\rangle (\kappa(x,z)-J(x,z))\right|\right)\right]  \\
&=   1+ t\int_0^\infty e^{tr} \Pp\left(\left| \left\langle \nabla^2 f(x_*(z,n)),\frac{z}{|z|}\otimes \frac{z}{|z|}\right\rangle (\kappa(x,z)-J(x,z))\right|\ge r\right)\,dr
 \\
&\le   1+ C_*t\int_0^\infty e^{tr-c_0r^\eta}\,dr\le c_1 \exp\left(c_2{t^{\eta/(\eta-1)}}\right).
\end{split}\end{equation}
Hence by \eqref{e:3.70}, \eqref{e:3.8},
$\eta\in (1,2)$, the independence of $\{\kappa(x,z): x\in \Z^d, z\in \Z^d_{+,*}\}$ (due to Assumption {\bf(A0)}) and  \cite[(2.95) in Theorem 2.51, p.\ 45]{BDR},
we get
$$
q_4(x,\delta,f_n )\le c_3\exp\left(-c_4\log^\eta(1+n)\right).
$$
 We emphasize that the constants $c_i$ $(i=1,\cdots,4)$ above are independent of $R$ and $n$ (but
may depend on
 $\|f\|_\infty$ and $\delta$).
Hence, for any $R\ge1$ and $\delta>0$,
\begin{equation}\label{e:keye}
\sum_{n=1}^\infty \sum_{x\in B({\bf0},nR)\cap  \Z^d}q_4(x,\delta,f_n)\le c_5\sum_{n=1}^\infty \exp(- c_6 \log^\eta(1+n))<\infty.
\end{equation}

Similarly,  we can prove that for any $\theta\in (0,1)$,
$$
\sum_{R=1}^\infty \sum_{x\in B({\bf0},2R)\cap \Z^d}\sum_{r=R^\theta}^R
q_5(x,r,\delta)\le c_{7}\sum_{R=1}^\infty \exp(-c_{8} \log^\eta(1+R))<\infty.
$$
By the Borel-Cantelli lemma, for a.s $\w \in \Omega$,
there exists $R_0(\w)\ge1$ such that for all
$R>R_0(\w)$, $x\in B({\bf0},2R)\cap \Z^d$ and $r\in [R^\theta,R]$,
\begin{equation}\label{t1-2-1}
\sum_{z\in \Z_{\bf0}^d: |z|\le r}
\frac{\kappa(x,z)}{|z|^{d}}
\le c_{9}\log (1+r).
\end{equation}
By the proof of \eqref{t1-1-4},
we can show that for a.s. $\omega \in \Omega$ there exists $R_0(\w)\ge1$ such that for all
$R>R_0(\w)$, $x\in B({\bf0},2R)\cap \Z^d$ and $r\ge R^{\theta}/2$,
$$
\sum_{z\in \Z_{\bf0}^d:|z|\le r} \kappa(x,z) \le c_{10} r^{d},
$$
which along with the same argument for \eqref{t1-1-7} gives us that \eqref{a5-2-2-2} holds for a.s. $\omega\in \Omega$. Thus, we know that
Assumption {\bf (C1)} holds for a.s. $\omega\in \Omega$.

\smallskip

To show that
Assumption {\bf(C2)} holds for a.s. $\omega\in \Omega$
as well, we return to \eqref{e:keye}. By
the Borel-Cantelli lemma, there is a set $\Omega_f \subset \Omega$ of full probability such that for every $\omega \in \Omega_f$
and for every rational
constant $\delta$ and integer $R\ge1$,
 there exists an integer $N_0:=N_0(\omega,R,\delta, \|f\|_\infty)\ge 2$
such that for all $n>N_0$ and $x\in B({\bf0},nR)\cap  \Z^d$,
$$
\frac{n^2}{\log(n+1)} \Big|\sum_{{z \in \Z^d:}
{1\le |z|\le n}}f_n(x,z)\frac{\kappa(x,z)-J(x,z)}{|z|^{d+2}}\Big|\le \delta .
$$
Taking $\delta \to 0$ in the inequality above,
we can obtain that for every $\w \in \Omega_f$ and every
integer  $R\geq 1$,
\begin{equation}\label{e:3.7}
\lim_{n \rightarrow \infty}
\frac{n^2}{\log (1+n)}\sup_{x\in \Z^d: \atop{|x|\le n R}}
\Big| \!\!\!\sum_{z\in \Z^d:{ 1\le |z|\le n }}\!\!\!
 f_n(x, z)
\frac{\kappa(x,z)-
\Ee [\kappa(x, z)]}{|z|^{d+2}} \Big|=0.
\end{equation}

It is known that there is a countable subset $\Upsilon$ of $C^2_c(\R^d)$ that is dense in
$(C^2_c(\R^d), \|\cdot\|_{2,\infty})$ (with $\|f\|_{2,\infty}:=\| f \|_\infty+\|\nabla f\|_\infty+\|\nabla^2 f\|_\infty$)\footnote{For reader's convenience, we present a proof of this fact in this footnote. Denote by $C_\infty(\R^d)$ the space of continuous functions on $\R^d$ that vanish at infinity, and $C_\infty^2 (\R^d)$ the space of $C^2$-smooth functions on $\R^d$ that
 together with their derivatives up to second orders  vanish at infinity.
  Observe that $C_\infty (\R^d)$ is a separable Banach space under the uniform norm $\|\cdot\|_\infty$.
 Denote by $\Phi$ the
isometric
map from $(C_\infty^2 (\R^d), \| \cdot \|_{2, \infty})$ into
$(C_\infty (\R^d)^{1+d+d^2}, \| \cdot \|_\infty)$ defined by $\Phi (f)=(f, \nabla f, \nabla^2 f)$.
Note that  $\Phi (C_\infty^2 (\R^d))$ is a closed subspace of
$(C_\infty (\R^d)^{1+d+d^2}, \| \cdot \|_\infty)$
and hence is separable under the norm $\| \cdot \|_\infty$.
It follows that $(C^2_\infty (\R^d), \| \cdot \|_{2, \infty})$ is separable. Let $\{g_n; n\geq 1\}$ be a countable dense subsequence in $(C^2_\infty (\R^d), \| \cdot \|_{2, \infty})$, and
$\{\varphi_k; k\geq 1\}$ a sequence of smooth functions with compact support on $\R^d$
so that $\varphi_k (x)=1$ for $|x|\leq k$ and $\varphi_k (x)=0$ for $|x|\geq k+1$ with
$\sup_{k\geq 1} \| \varphi_k\|_{2, \infty} <\infty$.
Then $\{g_n\varphi_k; n, k\geq 1\}$ is a countable dense sequence in $(C^2_c (\R^d),
\| \cdot \|_{2,\infty})$.
}.
Set $\Omega_1:=\bigcap_{f\in \Upsilon}\Omega_f$.
Clearly, $\Omega_1$ has full probability.

Given some $R>0$, define (for simplicity, we omit the parameter $\w$ in $T_n(f)$)
\begin{align*}
T_n(f) = \frac{n^2}{\log(1+n)}\sup_{x\in \Z^d:|x|\le nR}Q_n(f,x),\quad f\in C_c^2(\R^d),
\end{align*}
where
\begin{align*}
Q_n(f,x) =\Big|\sum_{z\in \Z^d: 1\le |z|\le n}
\Big(f\Big(\frac{x+z}{n}\Big)-f\Big(\frac{x}{n}\Big)\Big)\frac{\kappa(x,z)-J(x,z)}{|z|^{d+2}}\Big|.
\end{align*}
Therefore, for every $f,g\in C_c^2(\R^d)$,
\begin{align*}
 &|T_n(f)-T_n(g)|\\
 &\le \frac{n^2}{\log(1+n)} \sup_{x\in \Z^d:|x|\le nR}|Q_n(f,x)-Q_n(g,x)|\\
&\le \frac{n^2}{\log(1+n)}\sup_{x\in \Z^d:|x|\le nR}\Bigg|\sum_{z\in \Z^d: 1\le |z|\le n }
 \left( (f-g)\Big(\frac{x+z}{n}\Big) - (f-g)\Big(\frac{x}{n}\Big)
\right) \frac{\kappa(x,z)-J(x,z)}{|z|^{d+2}}\Bigg|\\
&\le \frac{1}{2\log(1+n)}\|\nabla^2 (f-g)\|_\infty\bigg(\sup_{x\in \Z^d:|x|\le nR}\sum_{1\le |z|\le n}\frac{\kappa(x,z)}{|z|^{d+2}}+
\sup_{x\in \Z^d:|x|\le nR}\sum_{1\le |z|\le n}\frac{J(x,z)}{|z|^{d+2}}\bigg),
\end{align*}where in the last inequality we used \eqref{balacd} and the mean value theorem.
By \eqref{t1-2-1} we know there exists a set $\Omega_2$ of
 full probability such that for all $\w\in \Omega_2$, any
 integer  $R\geq 1$
 and $n>N_0(\w)$ large enough,
\begin{align*}
\sup_{x\in \Z^d:|x|\le nR}\sum_{|z|\le n}\frac{\kappa(x,z)}{|z|^{d+2}}\le c_{11} \log(1+n).
\end{align*}
Meanwhile, since $J(x,z)$ is uniformly bounded,
\begin{align*}
\sup_{x\in \Z^d:|x|\le nR}\sum_{1\le |z|\le n}\frac{J(x,z)}{|z|^{d+2}}\le c_{12}\log(1+n).
\end{align*}
Combining all the estimates above together yields that for all $f,g\in C_c^2(\R^d)$ and
$\w\in \Omega_2$,
$$
\limsup_{n \rightarrow \infty}|T_n(f)-T_n(g)|\le c_{13}(\e)\|\nabla^2(f-g)\|_\infty,
$$
where $c_{13}$ is independent of $f,g$ and $n$.
With this assertion at hand, we can then follow the proof of Theorem {\rm \ref{t5-2}(i)} to deduce
that \eqref{e:3.7} holds for all $f\in C_c^2(\R^d)$ and
 every $\w\in \Omega_0:=\Omega_1 \cap \Omega_2$. This implies
 that \eqref{a5-3-1-2} holds for every $\w \in \Omega_0$
with
$\Phi_n(x,y):=J(nx,nz)$ for each $x\in n^{-1}\Z^d$, $z\in n^{-1}\Z_{\bf0}^d$.
Clearly
\eqref{matix} implies that \eqref{a5-3-1-1} holds for the function $
\Phi_n(x,z)$ defined above.
It is also obvious that $\Phi_n(x,z)=\Phi_n(x,-z)$ for $x\in n^{-1}\Z^d$, $z\in n^{-1}\Z_{\bf0}^d$ and $n\ge 1$, and that $\sup_{n\ge 1}\|\Phi_n\|_\infty<\infty$ thanks to \eqref{l5-3-1b}.
Thus we have established that Assumption {\bf (C2)} holds for every $\w \in \Omega_0$.
Therefore by Theorem \ref{t5-1-1-2}, we
get the conclusion for $\alpha=2$ in Theorem \ref{t5-2}.
\end{proof}

Next, we present the

\begin{proof}[Proof of Proposition $\ref{t5-4}$]
By \eqref{t5-4-1} and the continuity of $K$ in $\R^d_{\bf0}$,
we have
\begin{equation}\label{t1-1-5}
K(sz)=K(z), \quad  z\in \R_{\bf0}^d,\ s>0.
\end{equation}
Therefore, the function $K:\R^d_{\bf0} \rightarrow \R_+$ can
be viewed as a function defined on  $\mathbb{S}^{d-1}$.

According to \eqref{t5-4-1}, for every
integer $R\geq 1$,
 \begin{equation}\label{e:llff}
\lim_{n \rightarrow \infty}\sup_{x\in n^{-1}\Z^d: |x|\le R}\frac{1}{\log (1+n)}\sum_{z\in \Z_{\bf0}^d: |z|\le n}
\Big|\frac{z_i z_j \Ee[\kappa(nx,z)]}{|z|^{d+2}}-
\frac{z_i z_j K({z}/{n})}{|z|^{d+2}}\Big|=0.\end{equation}
 Indeed, \eqref{t5-4-1} implies
that   for
any integer
$R\geq 1$ and $\varepsilon\in (0,1)$,
$$\lim_{n \rightarrow \infty}\sup_{x\in n^{-1}\Z^d: |x|\le R}\frac{1}{\log (1+n)}\sum_{z\in \Z_{\bf0}^d:
n^\varepsilon\le |z|\le n}
\Big|\frac{z_i z_j \Ee[\kappa(nx,z)]}{|z|^{d+2}}-
\frac{z_i z_j K({z}/{n})}{|z|^{d+2}}\Big|=0.$$On the other hand, it follows from the boundedness of $K$ and $\sup_{x\in \Z^d, z\in \Z^d_{\bf 0}} \Ee[\kappa(x,z)]<\infty$ that
 for
 any integer
 $R\geq 1 $ and $\varepsilon\in (0,1)$,
\begin{align*}&\sup_{x\in n^{-1}\Z^d: |x|\le R}\frac{1}{\log (1+n)}\sum_{z\in \Z_{\bf0}^d:  |z|\le
n^\varepsilon}
\Big|\frac{z_i z_j \Ee[\kappa(nx,z)]}{|z|^{d+2}}-
\frac{z_i z_j K({z}/{n})}{|z|^{d+2}}\Big|\\
&\le \frac{\sup_{x\in \Z^d, z\in \Z^d_{\bf 0}} \Ee[\kappa(x,z)]+\|K\|_\infty}{\log (1+n)}\sum_{z\in \Z_{\bf0}^d:  |z|\le  n^\varepsilon}\frac{|z_i|| z_j|}{|z|^{d+2}}\\
&\le c_0\frac{ \varepsilon \log(1+ n)}{\log(1+n)}\to  c_0\varepsilon\quad \hbox{ as  } n\to \infty.
\end{align*}
  Putting both estimates above together    and letting $\varepsilon\to0$,
we get
\eqref{e:llff}.
Hence, in order to
verify \eqref{matix} it suffices to show
\begin{align*}
\lim_{n \rightarrow \infty}
\Big|\frac{1}{\log (1+n)}\sum_{z\in \Z_{\bf0}^d: |z|\le n}
\frac{z_i z_j K({z}/{n})}{|z|^{d+2}}-a_{ij}\Big|=0,\quad 1\le i,j\le d,
\end{align*}
where $a_{ij}=\int_{\mathbb{S}^{d-1}}K(\theta)\theta_i \theta_j \,d\theta$.

It follows from \eqref{t1-1-5} that
\begin{equation}\label{t1-1-6}
\begin{split}
\frac{1}{\log (1+n)}\sum_{z\in \Z_{\bf0}^d
: |z|\le n}
\frac{z_i z_j K({z}/{n})}{|z|^{d+2}}&=
\frac{1}{n^d \log (1+n) }\sum_{z\in n^{-1}\Z^d: 0<|z|\le 1} \frac{z_i z_j K(z)}{|z|^{d+2}}\\
&=\frac{1}{\log (1+n)}\sum_{z\in n^{-1}\Z^d: 0<|z|\le 1}\int_{Q_n(z)}\frac{z_i z_j K(z)}{|z|^{d+2}}\,dy,
\end{split}
\end{equation}
where
$Q_n(z):=\Pi_{1\le i\le d} (z_i-1/2n,z_i+1/(2n)]$ for
 $z:=(z_1,\cdots,z_d)\in n^{-1}\Z^d$. By the mean value theorem and the fact that $K$
is uniformly bounded, for any $z\in n^{-1}\Z^d$ with $\frac{\sqrt{d}}{n}\le  |z|\le 1$ and any $y\in Q_n(z)$,
\begin{equation}\label{e:3.43}
\bigg|\frac{z_iz_jK(z)}{|z|^{d+2}}-\frac{y_iy_jK(y)}{|y|^{d+2}}\bigg|
\le c_1\bigg(\frac{1}{n(|z|-\frac{\sqrt{d}}{2n})^{d+1}}+\frac{\xi_{n}}{(|z|-\frac{\sqrt{d}}{2n})^d}\bigg),
\end{equation}
where
$$
\xi_{n}:=\sup_{0<|y|, |z|\le 1,|y-z|\le \frac{\sqrt{d}}{2n}}|K(z)-K(y)|.
$$
Therefore,
\begin{align*}
  I(n)  &:=
\bigg|\frac{1}{\log (1+n)}\sum_{z\in n^{-1}\Z^d: 0<|z|\le 1}\int_{Q_n(z)}\frac{z_i z_j K(z)}{|z|^{d+2}}\,dy-
\frac{1}{\log (1+n)}\int_{\{1/n\le |y|\le 1\}}
\frac{y_i y_j K(y)}{|y|^{d+2}}\,dy\bigg|\\
&\le \frac{1}{n^d \log (1+n)}\sum_{z\in n^{-1}\Z^d: |z|< \frac{\sqrt{d}}{n}}\bigg|\frac{z_i z_j K(z)}{|z|^{d+2}}\bigg|\\
&\quad+\frac{1}{\log (1+n)}\int_{\{(\frac{\sqrt{d}}{2n}\wedge \frac{1}{n})\le |y|\le \frac{3\sqrt{d}}{2n}\}\cup \{1\le |y|\le1
+\frac{\sqrt{d}}{2n}\}  }
\bigg|\frac{y_i y_j K(y)}{|y|^{d+2}}\bigg|\,dy\\
&\quad + \frac{1}{n^d\log (1+n)}
\sum_{z\in \Z^d: \frac{\sqrt{d}}{n}\le |z|\le 1}
\int_{Q_n(z)}\bigg|\frac{z_iz_jK(z)}{|z|^{d+2}}-\frac{y_iy_jK(y)}{|y|^{d+2}}\bigg|\,dy\\
&=:I_1^n+I_2^n+I_3^{n}.
\end{align*}
It is clear that
$$
I_1^n\le \frac{c_1}{\log (1+n)},\qquad I_2^n\le \frac{c_2}{\log (1+n)},
$$
and, by \eqref{e:3.43},
\begin{align*}
I_3^{n} &\le \frac{c_1}{n^d\log (1+n)}\sum_{k=[\sqrt{d}]}^{n}\sum_{z\in n^{-1}\Z^d:  |z|=\frac{k}{n}}
\bigg(\frac{n^d}{(k-\frac{\sqrt{d}}{2})^{d+1}}+\frac{n^d\xi_{n}}{(k-\frac{\sqrt{d}}{2})^d}\bigg)\le c_3\Big(\frac{1}{\log (1+n)}+\xi_{n}\Big).
\end{align*}
Since $K(\cdot)$ is a continuous function on $\R_{\bf0}^d$ and satisfies \eqref{t1-1-5},
$\lim_{n\rightarrow \infty}\xi_{n}=0$. Hence we deduce from the above estimates that
\begin{align*}
\lim_{n \rightarrow \infty}
 I(n) =0.
\end{align*}
This along with \eqref{t1-1-6} gives us
\begin{align*}
&\lim_{n \rightarrow \infty}\bigg|\frac{1}{\log (1+n)}\sum_{z\in \Z_{\bf0}^d: |z|\le n} \frac{z_i z_j K({z}/{n})}{|z|^{d+2}}
-\int_{\mathbb{S}^{d-1}}\theta_i \theta_j K(\theta)\,d\theta\bigg|\\
&=\lim_{n \rightarrow \infty} \bigg|\frac{1}{\log (1+n)}\int_{\{1/n\le |y|\le 1\}}
\frac{y_i y_j K(y)}{|y|^{d+2}}\,dy-\int_{\mathbb{S}^{d-1}}\theta_i \theta_j K(\theta)\,d\theta\bigg|\\
&=\lim_{n \rightarrow \infty}\bigg|\frac{1}{\log (1+n)}\int_{1/n}^1 r^{-1}
\int_{\mathbb{S}^{d-1}}\theta_i \theta_j K(\theta)\,d\theta\,dr-
\int_{\mathbb{S}^{d-1}}\theta_i \theta_j K(\theta)\,d\theta\bigg|\\
&=0,
\end{align*}
where in the second
equality we used  \eqref{t1-1-5}.
In particular, we have verified that the limit in \eqref{matix} exists with $a_{ij}=
\int_{\mathbb{S}^{d-1}}\theta_i \theta_j K(\theta)\,d\theta$.
\end{proof}

\section{Extensions and remarks}\label{section4}

\subsection{Extensions}

From the proof for the case $\alpha\in(0,2)$ in Theorem \ref{t5-2},
one can see that, at the expense of a
higher moments condition on $\kappa(x,z)$,
 it is possible to relax the independence
 Assumption {\bf(A0)} to a block independence condition that
 $$
\left\{\big\{\kappa(x,z):  z\in \Z_{+,*}^d\,\, \mathrm{ with}\,\, |z|=
r \big\}: x\in \Z^d,
r\ge1\right\}
\hbox{ are independent}.
$$

\medskip

When $\alpha\in (0,1)$, since $1/|z|^{d+\alpha}$ is integrable over the unit ball $B(0, 1)$ in $\R^d$,
 we can drop the balanced condition \eqref{balacd} on $\kappa (x, z)$.
 In this case,
Theorem $\ref{t5-2}$(i) holds
with Assumption {\bf(A0)} and Assumption {\bf(A1)}(i)  being replaced by

\medskip

\noindent{\bf Assumption {\bf(A0$^*$)}}\,\, \emph{$\{\kappa(x,z): x \in \Z^d,
z\in \Z^d_{\bf0} \}$
is a sequence of independent non-negative
random variables.}

\medskip

\noindent{\bf Assumption {\bf(A1$^*$)}}(i)\,\,
\emph{$d>2-2\alpha$ and there is some
$p>\max\{\frac{2(d+1)}{d},
\frac{d+1}{1-\alpha}\}$ such that
$$
\sup_{x\in \Z^d, z\in \Z^d_{\bf0}}\Ee \left[ \kappa(x,z)^p \right]<\infty .
$$
}

Indeed,
as is shown in
the proof of \cite[Proposition 5.6]{CKW},
we know that
Assumption {\bf (A0$^*$)} and {\bf(A1$^*$)}(i) together imply
Assumption {\bf (B1${}^*$)}.
Hence the conclusion of Theorem \ref{t5-2}(i) still holds by following the proof of Theorem \ref{t5-2}(i) and by using Theorem \ref{t5-1*}.

\subsection{Remarks}\label{e:diff}

Now suppose $\alpha>2$. We consider the following two conditions.

\medskip

{\paragraph{{\bf Assumption (D1)}}\it
$$\sup_{x\in \Z^d}\sum_{z\in \Z_{\bf0}^d}\frac{\kappa(x,z)}{|z|^{d+\alpha-2}}<\infty.$$}

\medskip

{\paragraph{{\bf Assumption (D2)}} \it
 For any $n\ge1$, there exists a
function
$\Phi_n(x,z):  n^{-1}\Z^d \times
n^{-1}\Z_{\bf0}^d\rightarrow (0,\infty)
$
with $\Phi_n(x,z)=\Phi_n(x,-z)$ for all $x \in
n^{-1}\Z^d$ and $z\in
n^{-1}\Z_{\bf0}^d$,  so that $\sup_{n\ge 1}\|\Phi_n\|_\infty<\infty$, and for any
integer $R\geq 1$
and every $f\in C_c^2(\R^d)$,
$$
\lim_{n \rightarrow \infty}
n^2\sup_{x\in \Z^d: \atop{ |x|\le nR}}
\Bigg| \sum_{z\in \Z_{\bf0}^d  }
\!\!\left(f\Big(\frac{x+z}{n}\Big)-f\left(\frac{x}{n}\right)\!\right)
\!\!\left(\frac{\kappa(x,z)- \Phi_n({x}/{n},{z}/{n})}{|z|^{d+\alpha}}\!\right) \Bigg| =0,
$$
and that for any
integer $R\ge1$ and $1\le i,j\le d$,
$$
\lim_{n\to\infty}\sup_{x\in n^{-1}\Z^d: |x|\le R}\Bigg|
\sum_{z\in \Z_{\bf0}^d} {z_iz_j  \Phi_n(x, {z}/{n})}/{|z|^{d+2}}-a_{ij}\Bigg|=0
$$
for some constant matrix $A:=(a_{ij})_{1\le i,j\le d}$ on $\R^d$.}

\bigskip

We can establish the following theorem by a similar argument as
that
of Theorem \ref{t5-1-1-2}.

\medskip

\begin{theorem} Let $\alpha>2$. Assume that
the balanced condition \eqref{balacd}, and Assumptions {\bf(D1)} and {\bf(D2)} hold.
Then the conclusion of Theorem $\ref{t5-1-1-2}$ holds.
\end{theorem}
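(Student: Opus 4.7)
The plan is to adapt the three-step proof of Theorem~\ref{t5-1-1-2} to the purely diffusive scaling $X^n_t:=n^{-1}X_{n^2 t}$, under which the scaled generator becomes
$$
\LL_n f(x)=n^2\sum_{z\in\Z^d_{\bf 0}}\bigl(f(x+z/n)-f(x)\bigr)\frac{\kappa(nx,z)}{|z|^{d+\alpha}},
$$
and the limiting operator $\bar\LL$ of \eqref{l5-3-1a-2} is the generator of Brownian motion with covariance matrix $A$, whose martingale problem on $C_c^2(\R^d)$ is automatically uniquely solvable. For tightness, I would take the bump function $f_{x,r}$ of Lemma~\ref{l5-1}, split $\LL f_{x,r}(y)=I_{1,r}+I_{2,r}$ at $|z|=r$, and use \eqref{balacd} to cancel the first-order contribution in the near region. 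Standard Taylor estimates then give $|I_{1,r}|\le cr^{-2}\sum_{|z|\le r}\kappa(y,z)/|z|^{d+\alpha-2}$ and $|I_{2,r}|\le 2r^{-2}\sum_{|z|>r}\kappa(y,z)/|z|^{d+\alpha-2}$; both right-hand sides are bounded globally by Assumption~\textbf{(D1)}, yielding $|\LL f_{x,r}(y)|\le c/r^2$ and hence the diffusive exit-time estimate $\Pp_x(\tau_{B(x,r)}\le t)\le ct/r^2$. After rescaling this becomes $\Pp^n_{\bf 0}(\tau^n_{B({\bf 0},R)}\le T)\le cT/R^2$, so $\{\Pp^n_{\bf 0}\}_{n\ge 1}$ is tight. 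The same two-piece bound applied to a general $f\in C_c^2(\R^d)$ also delivers $\sup_{n,x}|\LL_n f(x)|<\infty$ and $|\LL_n f(x)|\to 0$ as $|x|\to\infty$ uniformly in $n$.

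For the generator convergence I fix $f\in C_c^3(\R^d)$ and first invoke the first half of Assumption~\textbf{(D2)} to replace $\kappa(nx,z)$ by $\Phi_n(x,z/n)$, the replacement error vanishing uniformly on $|x|\le R$. Since both $\kappa$ and $\Phi_n$ are even in $z$, the balanced condition lets me pair $\pm z$ and reduce the $\Phi_n$-sum to symmetric second differences
$$
f(x+y)+f(x-y)-2f(x)=\langle\nabla^2 f(x),y\otimes y\rangle+E(x,y),
$$
with $|E(x,y)|\le c\|\nabla^3 f\|_\infty |y|^3\wedge 2\|\nabla^2 f\|_\infty |y|^2$. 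Splitting at $|z|=n$, the small-$z$ Taylor tail contributes at most $cn^{-1}\|\nabla^3 f\|_\infty\|\Phi_n\|_\infty\sum_{|z|\le n}|z|^{3-d-\alpha}$, which is $O(n^{2-\alpha})$ (or $O(n^{-1}\log n)$ at the borderline $\alpha=3$), while the large-$z$ tail is dominated by $c\|\nabla^2 f\|_\infty\|\Phi_n\|_\infty\sum_{|z|>n}|z|^{2-d-\alpha}=O(n^{2-\alpha})$. Both vanish since $\alpha>2$ and $\sup_n\|\Phi_n\|_\infty<\infty$. The surviving main term is
$$
\tfrac12\sum_{i,j}\partial_i\partial_j f(x)\sum_{z\in\Z^d_{\bf 0}}\frac{z_iz_j\Phi_n(x,z/n)}{|z|^{d+\alpha}},
$$
which by the second half of Assumption~\textbf{(D2)} (the exponent should be read as $d+\alpha$ to match the kernel above, consistently with the $\alpha=2$ formula \eqref{a5-3-1-1}) converges uniformly on $|x|\le R$ to $\tfrac12\sum_{i,j}a_{ij}\partial_i\partial_j f(x)=\bar\LL f(x)$.

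With tightness, the uniform bound on $\LL_n f$, and the uniform convergence $\LL_n f\to\bar\LL f$ in hand, the Skorohod-representation/martingale-problem argument at the end of the proof of Theorem~\ref{t5-1} applies verbatim to identify every subsequential weak limit as the law of Brownian motion with covariance $A$ started at $\mathbf 0$. The principal obstacle is the combined Taylor/tail analysis in the second step: for $\alpha\in(2,3]$ neither a single-order Taylor control nor a single tail bound suffices by itself, and the cutoff-at-$|z|=n$ device—which balances an $O(n^{2-\alpha})$ Taylor remainder on $\{|z|\le n\}$ against an $O(n^{2-\alpha})$ tail on $\{|z|>n\}$—is essential and depends crucially on the strict inequality $\alpha>2$ together with the uniform sup-bound $\sup_n\|\Phi_n\|_\infty<\infty$.
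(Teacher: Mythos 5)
Your proposal is correct and spells out exactly the ``similar argument'' that the paper merely asserts without detail. You have accurately identified the three ingredients needed: (1) a purely diffusive exit-time estimate $\Pp_x(\tau_{B(x,r)}\le t)\le ct/r^2$ obtained by pairing the second-order Taylor bound on $\{|z|\le r\}$ with the crude bound $|z|^{-(d+\alpha)}\le r^{-2}|z|^{-(d+\alpha-2)}$ on $\{|z|>r\}$, both controlled by Assumption \textbf{(D1)}; (2) uniform convergence $\LL_n f\to\bar\LL f$ on compacts, driven by the first half of \textbf{(D2)} to swap $\kappa$ for $\Phi_n$, then symmetrization via \eqref{balacd} and a two-sided Taylor/tail split at $|z|=n$ producing an $O(n^{2-\alpha})$ error; and (3) the Skorohod/martingale-problem identification step, unchanged from the proof of Theorem \ref{t5-1}.

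You are also right that the exponent in the second display of Assumption \textbf{(D2)} should read $|z|^{d+\alpha}$ rather than $|z|^{d+2}$: after replacing $\kappa$ by $\Phi_n$, the main Taylor term of $\LL_n f(x)$ is $\tfrac12\sum_{i,j}\partial_i\partial_j f(x)\sum_z z_iz_j\Phi_n(x,z/n)/|z|^{d+\alpha}$, and $d+2$ in place of $d+\alpha$ would make the sum divergent and the condition unusable for $\alpha>2$. This appears to be a copy-paste artifact from the $\alpha=2$ case \eqref{a5-3-1-1}, where $d+\alpha=d+2$; your reading is the one the authors must have intended. The only small thing you leave implicit is the decay of $\LL_n f(x)$ for $|x|$ large: when $\mathrm{supp}(f)\subset B({\bf 0},N_0)$ and $|x|>4N_0$, the nonvanishing terms force $|z|>n|x|/2$, and then $|z|^{-(d+\alpha)}\le (n|x|/2)^{-2}|z|^{-(d+\alpha-2)}$ combined with \textbf{(D1)} gives $|\LL_n f(x)|\le C\|f\|_\infty/|x|^2$ uniformly in $n$; worth stating since the Skorohod step uses it. Apart from that, the argument is complete and matches the paper's intended route.
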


However, we
are unable obtain the corresponding result of  Theorem \ref{t5-2} for $\alpha>2$
in random environments
under the Assumption {\bf(A0)} and \eqref{matix} (but with  $|z|^{-d-\alpha}$ in place of  $|z|^{-d-2}$ and without the $(\log n)^{-1}$ factor). It appears
that the Borel-Cantelli argument to verify Assumptions {\bf (D1)} and {\bf (D2)}
fails
in this setting.  This is because
when $\alpha>2$, the behavior of the limiting process for the scaled processes is
not determined merely by
the expectation of random coefficients $\kappa(x,z)$
as in the the case of NNBRWs in random environments
(see, for instance,  \cite{BD}).

\bigskip

{\small
\noindent \textbf{Acknowledgements.} \rm
We thank the anonymous referee for valuable comments.
 The research of Xin Chen is supported by the
National Natural Science Foundation of China (Nos.\ 11501361 and 11871338).\
The research of Zhen-Qing Chen is partially supported by Simons Foundation Grant 520542 and a
 Victor Klee Faculty Fellowship at UW. \
 The research of Takashi Kumagai is supported
by JSPS KAKENHI Grant Number JP17H01093 and by the Alexander von Humboldt Foundation.\
The research of Jian Wang is supported by the National
Natural Science Foundation of China (Nos.
11831014 and 12071076),
the Program for Probability and Statistics: Theory and Application (No.\ IRTL1704) and the Program for Innovative Research Team in Science and Technology in Fujian Province University (IRTSTFJ).

\bigskip

}

\vskip 0.3truein
{\small
{\bf Xin Chen:}
   Department of Mathematics, Shanghai Jiao Tong University, 200240 Shanghai, P.R. China. \texttt{chenxin217@sjtu.edu.cn}
	
\bigskip
	
{\bf Zhen-Qing Chen:}
   Department of Mathematics, University of Washington, Seattle,
WA 98195, USA. \newline \texttt{zqchen@uw.edu}

\bigskip

{\bf Takashi Kumagai:}
 Research Institute for Mathematical Sciences,
Kyoto University, Kyoto 606-8502, Japan.
\texttt{kumagai@kurims.kyoto-u.ac.jp}

\bigskip

{\bf Jian Wang:}
    College of Mathematics and Informatics \& Fujian Key Laboratory of Mathematical
Analysis and Applications (FJKLMAA) \&
Center for Applied Mathematics of Fujian Province (FJNU), Fujian Normal University, 350007 Fuzhou, P.R. China.
     \texttt{jianwang@fjnu.edu.cn}}


\begin{thebibliography}{99}


\bibitem{Ad} Aldous, D.: Stopping times and tightness, \emph{Ann. Probab.}, \textbf{6}, 1978, 335--340.

\bibitem{BT} Bass, R.F. and Tang, H.: The martingale problem for a class of stable-like processes,
\emph{Stochastic Process. Appl.}, {\bf 119}, 2009, 1144--1167.

\bibitem{BDR}
Bercu, B., Delyon, B. and Rio, E.: \emph{Concentration Inequalities
for Sums and Martingales}, Springer, 2015.

\bibitem{BCDG} Berger, N., Cohen, M., Deuschel, J.-D. and Guo, X.:
An elliptic Harnack inequality for random walk in balanced environments,
arXiv:1807.03531.

\bibitem{BD} Berger, N. and Deuschel, J.-D.: A quenched invariance principle for non-elliptic random walk in i.i.d. balanced random environment,
\emph{Probab. Theory Related Fields}, {\bf 158}, 2014, 91--126.

\bibitem{CKW} Chen, X., Kumagai, T. and Wang, J.: Random conductance models with stable-like jumps:
quenched invariance principle,  to appear in \emph{Ann. Appl. Probab.},
arXiv:1805.04344.

\bibitem{CZ} Chen, Z.-Q. and Zhang, X.:
Uniqueness of stable-like processes,
arXiv:1604.02681.
	
\bibitem{DG} Deuschel, J.-D. and Guo, X.: Quenched local central limit theorem for random walks in a time-dependent balanced random environment, arXiv:1710.05508.

\bibitem{DGR} Deuschel, J.-D., Guo, X. and Ramirez, A.: Quenched invariance principle for random walk in time-dependent balanced random environment,
\emph{Ann. Inst. H. Poincar\'e Probab. Statist.}, {\bf 54}, 2018, 363--384.

\bibitem{GS} Guillen, N. and Schwab, R.W.:
Aleksandrov-Bakelman-Pucci type estimates for integro-differential equations, \emph{Arch. Ration. Mech. Anal.},
{\bf 206}, 2012, 111--157.

\bibitem{GZ} Guo, X. and Zeitouni, O.: Quenched invariance principle for random walks in balanced random environment,
\emph{Probab. Theory Related Fields}, {\bf 152}, 2012, 207--230.

\bibitem{Law} Lawler, G.: Weak convergence of a random walk in a random environment,
\emph{Comm. Math. Phys.}, {\bf 87}, 1982, 81--87.

\bibitem{PV} Papanicolaou, G.C. and Varadhan, S.R.S.: Diffusions with random coefficients,
in {\it Statistics and Probability: Essays in Honor of Rao, C.R.}, 547--552, North-Holland,
Amsterdam, 1982.

\bibitem{P} Peng, J.: Uniqueness in law for stable-like processes of variable order,
to appear in
\emph{J. Theoret. Probab.}, arXiv:1802.01151.
\end{thebibliography}
\end{document}